\tikzstyle{CCC}=[shape=circle, draw, fill=black!10, align=center, font=\scriptsize]
\tikzstyle{CC}=[shape=circle, draw, align=center, font=\scriptsize]
\theoremstyle{plain}
\newtheorem{lemma}{Lemma}[section]
\newtheorem{prop}[lemma]{Proposition}
\newtheorem{theo}[lemma]{Theorem}
\theoremstyle{remark}
\newtheorem{rem}[lemma]{Remark}
\newtheorem*{notat}{Notation}
\theoremstyle{definition}
\newtheorem{definition}[lemma]{Definition}
\newtheorem{ex}[lemma]{Example}
\newcommand{\N}{\mathbb{N}}
\newcommand{\Pc}{\mathcal{P}}
\newcommand{\Gm}{\Gamma}
\newcommand{\Gmc}{\Gamma_C}
\newcommand{\op}{\textup{op}}
\newcommand{\K}{\mathcal{K}}
\newcommand{\X}{\mathcal{X}}
\newcommand{\mxl}{\textup{mxl}}
\newcommand{\mnl}{\textup{mnl}}
\newcommand{\Sts}{\textnormal{st}}
\newcommand{\st}[1]{\textnormal{st}(#1)}
\newcommand{\St}{\textnormal{St}}
\newcommand{\I}{\mathcal{I}}
\newcommand{\id}{\textup{id}}
\newcommand{\im}{\textup{Im}}
\begin{document}

\title{The crosscut poset}

\author{Miguel Ottina}
\address{}
\email{miguelottina@gmail.com}

\subjclass[2010]{Primary: 06A06, 06A07. Secondary: 54H25, 55M20.}


\keywords{Crosscut complex; Crosscut poset; Fixed point property; Fixed simplex property.}

\thanks{Research partially supported by grants M044 (2016--2018) and 06/M118 (2019--2021) of Universidad Nacional de Cuyo.}

\begin{abstract}
We introduce a new combinatorial invariant, which we call crosscut poset, that is finer than the crosscut complex. We exhibit many applications of the crosscut poset which include a generalization of Bj\"orner's crosscut theorem and two results concerning the fixed point property and the fixed simplex property.
\end{abstract}

\maketitle

\section{Introduction}

A classical technique to study combinatorial objects such as posets or graphs is to associate a simplicial complex to each of them and then infer properties of them from topological properties of the associated simplicial complex. Well-known examples of this technique are the order complex, the crosscut complex \cite{bjorner1981homotopy}, Lov\'asz's neigbourhood complex \cite{lovasz1978kneser} and $\mathrm{Hom}$--complexes \cite{babson2003topological}.

The crosscut complex of a poset is a simplicial complex constructed from a cutset of the given poset. Specifically, given a poset $P$ and a cutset $X$ of $P$ the crosscut complex of $P$ (with respect to $X$) is the simplicial complex $\Gmc(P,X)$ whose simplices are the non-empty finite subsets of $X$ which are astral (a subset $A\subseteq P$ is astral if there exists $a\in P$ such that every element of $A$ is comparable to $a$).

The notion of crosscut complex was introduced by Bj\"orner in \cite{bjorner1981homotopy} to study homotopical properties of posets. One of the main results of that work is Bj\"orner's Crosscut Theorem \cite[Theorem 2.3]{bjorner1981homotopy}, which states that, under suitable assumptions, the geometric realizations of the crosscut complex and the order complex of a poset are homotopy equivalent. This result was used in \cite{baclawski1979fixed} to study the fixed point property of posets but has many other applications in Combinatorics, Topology and Algebra \cite{bjorner1999complexes,clark2010poset,kozlov2006simple,mapes2013finite,minian2014note,olteanu2016buchberger}. Moreover, in the case of finite posets, a simple homotopy variant of Bj\"orner's Crosscut Theorem is given by Barmak in \cite[Theorem 4.5]{barmak2011quillen}.

In this article we introduce a new combinatorial invariant, which we call \emph{crosscut poset}, which is strongly related to Bj\"orner's crosscut complex. The main idea of the crosscut poset is to consider, for each non-empty astral subset $A$ contained in a given cutset $X$ of a poset $P$, the connected components of the subposet $\st{A}$ of the elements of $P$ which are comparable to all the elements of $A$. These connected components are then ordered by inclusion to obtain the crosscut poset. Note that a subset $A$ is astral if and only if $\st{A}\neq\varnothing$. Therefore, instead of analysing whether $\st{A}$ is non-empty as in the crosscut complex, we track the connected components of the non-empty subposets $\st{A}$. In this way, we are able to obtain much more information of the poset $P$ and, in consequence, the crosscut poset is a finer invariant than the crosscut complex. In addition, we prove that, under the hypotheses of Bj\"orner's Crosscut Theorem, the geometric realization of the crosscut complex is homotopy equivalent to the geometric realization of the order complex of the crosscut poset, which gives an important relationship between the crosscut complex and the crosscut poset.

One of the main results of this article states that, under suitable assumptions, the crosscut poset of a poset $P$ is weak homotopy equivalent to $P$, which, as we shall see, gives a generalization to Bj\"orner's Crosscut Theorem and to Barmak's simple homotopy variant. By means of the crosscut poset we also generalize a result of Baclawski and Bj\"orner's which gives a sufficient condition for a poset to have the strong fixed point property \cite[Proposition 3.1]{baclawski1979fixed} and we obtain two additional results concerning the fixed point property and the fixed simplex property.

\section{Preliminaries and notations}

Let $P$ be a poset. The sets of maximal and minimal elements of $P$ will be denoted by $\mxl(P)$ and $\mnl(P)$ respectively and  the opposite poset of $P$ will be denoted by $P^{\op}$. In addition, for each $a\in P$ we denote
\begin{align*}
P_{\leq a} & = \{x\in P \mid x\leq a\}, & P_{\geq a} & = \{x\in P \mid x\geq a\}, \\
P_{< a} &= \{x\in P \mid x< a\}, & P_{> a} &= \{x\in P \mid x > a\}.
\end{align*}
The order complex of $P$ will be denoted by $\K(P)$ and its geometric realization will be denoted by $|\K(P)|$. On the other hand, if $K$ is a simplicial complex, $\X(K)$ will denote the poset of simplices of $K$ ordered by inclusion.

\begin{definition}
Let $P$ be a poset.
\begin{itemize}
\item Let $a\in P$. We define the \emph{star of $a$ in $P$} as the subposet $\Sts_P(a)=P_{\leq a} \cup P_{\geq a}$.
\item Let $A$ be a non-empty subset of $P$. We define the \emph{star of $A$ in $P$} as the subposet 
\begin{displaymath}
\Sts_P(A)=\bigcap_{a\in A}\Sts_P(a).
\end{displaymath}
\end{itemize}
\end{definition}
When there is no risk of confusion we will denote $\Sts_P(a)$ by $\st{a}$ and $\Sts_P(A)$ by $\st{A}$.

\smallskip

Recall that if $K$ is a simplicial complex, $S_K$ is the set of simplices of $K$ and $\sigma\in S_K$, the \emph{closed star} of $\sigma$ in $K$ is the subcomplex $\St_K(\sigma)$ of $K$ whose set of simplices is $\{\tau\in S_K \mid \tau\cup \sigma \in S_K \}$. It is easy to verify that if $P$ is a poset and $\sigma$ is a simplex of $\K(P)$ then $\St_{\K(P)}(\sigma)=\K(\Sts_P(\sigma))$.

\subsection*{Cutsets and crosscuts}

Let $P$ be a poset. Following \cite{bjorner1981homotopy}, we say that a subset $S\subseteq P$ is \emph{bounded} if it is either bounded below or bounded above, and that a subset $A\subseteq P$ is \emph{astral} if there exists $x\in P$ such that $A\subseteq \st{x}$. Note that a subset $A\subseteq P$ is astral if and only if $\st{A}\neq\varnothing$.

A subset $X\subseteq P$ is a \emph{cutset} of $P$ if for every finite chain $\sigma$ contained in $P$ there exists an element $a\in X$ such that $\sigma \cup \{a\}$ is a chain (or equivalently, $\sigma\subseteq \st{a}$). We say that a cutset $X$ is \emph{coherent} if every finite non-empty bounded subset of $X$ has either a meet or a join in $P$. A subset $X\subseteq P$ is a \emph{crosscut} of $P$ if it is an antichain of $P$ which is a coherent cutset. Note that if $X$ is a crosscut of $P$ and $A$ is a subset of $X$ then $A$ is bounded if and only if it is astral.

If $X$ is a cutset of $P$, the \emph{crosscut complex} $\Gmc(P,X)$ is the simplicial complex whose simplices are the non-empty finite astral subsets of $X$ \cite[p.94]{bjorner1981homotopy}. Note that this definition might be extended to any subset $X\subseteq P$.

The following result is contained in the proof of Theorem 2.3 of \cite{bjorner1981homotopy}. We include it here for future reference and we give a proof, following \cite{bjorner1981homotopy}, for completeness. 

\begin{lemma}[Bj\"orner] \label{lemma_astral_bjorner}
Let $P$ be a poset and let $X$ be a coherent cutset of $P$. Let $\sigma\subseteq X$ be a non-empty finite astral subset of $P$. Then there exists $z\in \st{\sigma}$ such that $\st{\sigma}\subseteq \st{z}$. In particular, $\st{\sigma}=\Sts_{\st{\sigma}}(z)$. 
\end{lemma}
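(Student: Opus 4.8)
The plan is to localize the problem to a bounded piece of $\sigma$ and then let coherence supply the element $z$. Since $\sigma$ is astral we have $\st{\sigma}\neq\varnothing$, so I would fix any $x\in\st{\sigma}$. Every element of $\sigma$ is comparable to $x$, so $\sigma=\sigma_1\cup\sigma_2$ where $\sigma_1=\{a\in\sigma\mid a\leq x\}$ and $\sigma_2=\{a\in\sigma\mid a\geq x\}$. Note that $\sigma$ itself need not be a bounded subset of $P$ (astrality is weaker than boundedness), so one cannot feed $\sigma$ directly into the coherence hypothesis; this is precisely what forces the decomposition. Since $\sigma\neq\varnothing$, at least one of $\sigma_1,\sigma_2$ is non-empty, and replacing $P$ by $P^{\op}$ if necessary — which interchanges $\sigma_1$ with $\sigma_2$ and meets with joins while leaving the hypotheses intact, as ``coherent cutset'' and ``astral'' are self-dual notions — I may assume $\sigma_1\neq\varnothing$. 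Now $\sigma_1$ is a non-empty finite subset of $X$ that is bounded above by $x$, hence bounded, so by coherence it has a meet or a join in $P$; I set $z$ equal to that meet or join.

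Next I would verify that $z\in\st{\sigma}$. In either case $z\leq x$: if $z=\bigvee\sigma_1$ because $x$ is an upper bound of $\sigma_1$, and if $z=\bigwedge\sigma_1$ because $z\leq b\leq x$ for any $b\in\sigma_1$. Being a bound of $\sigma_1$, $z$ is comparable to every element of $\sigma_1$; and $z\leq x\leq a$ for every $a\in\sigma_2$; hence $z$ is comparable to every element of $\sigma=\sigma_1\cup\sigma_2$, i.e.\ $z\in\st{\sigma}$. For the inclusion $\st{\sigma}\subseteq\st{z}$ I would isolate the elementary observation that if $t\in P$ is comparable to every element of a finite non-empty subset $S$ admitting a meet $\bigwedge S$ in $P$, then $t$ is comparable to $\bigwedge S$ — either $t$ is a lower bound of $S$, whence $t\leq\bigwedge S$, or $t\geq s\geq\bigwedge S$ for some $s\in S$ — and dually for joins. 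Applying this with $S=\sigma_1$ and $t=w$ for an arbitrary $w\in\st{\sigma}$ shows that $w$ is comparable to $z$, so $\st{\sigma}\subseteq\st{z}$. Finally the ``in particular'' clause is immediate: $\Sts_{\st{\sigma}}(z)$ is by definition the set of elements of $\st{\sigma}$ that are comparable to $z$, and since $\st{\sigma}\subseteq\st{z}$ this is all of $\st{\sigma}$.

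I do not expect a genuine obstacle here; the one subtlety is conceptual rather than computational, namely recognizing that astrality of $\sigma$ does not give boundedness of $\sigma$, so coherence must be applied to the bounded piece $\sigma_1$ (or $\sigma_2$) and one must then check that the resulting meet or join still controls all of $\sigma$ — which is exactly the role played by the auxiliary inequality $z\leq x$.
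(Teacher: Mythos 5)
Your proof is correct and follows essentially the same route as the paper's: both decompose $\sigma$ into the parts below and above a point of $\st{\sigma}$, apply coherence to the non-empty bounded piece $\sigma_1$ to get a meet or join $z$, and then check $z\in\st{\sigma}$ and $\st{\sigma}\subseteq\st{\sigma_1}\subseteq\st{z}$ by the same comparability case analysis. The only differences are presentational (you package the case analysis as a single observation about meets plus its dual, and justify the WLOG via passing to $P^{\op}$), so there is nothing to add.
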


\begin{proof}
Since $\sigma$ is an astral subset there exists $y\in P$ such that $\sigma\subseteq \st{y}$. Let $\sigma_1=\sigma\cap P_{\leq y}$ and $\sigma_2=\sigma\cap P_{\geq y}$. Note that $\sigma=\sigma_1\cup\sigma_2$ and that $\sigma_1$ and $\sigma_2$ are bounded subsets of $X$. Since $\sigma\neq\varnothing$, without loss of generality we may assume that $\sigma_1\neq\varnothing$. And since $X$ is a coherent cutset, the subset $\sigma_1$ has either a meet or a join in $P$.

Suppose first that $\sigma_1$ has a meet in $P$ and let $z$ be that meet. We will prove that $\st{\sigma_1}\subseteq \st{z}$. Let $x\in \st{\sigma_1}$. If there exists $a\in\sigma_1$ such that $x\geq a$ then $x\geq z$ and thus $x\in \st{z}$. Otherwise, for all $a\in\sigma_1$ we have that $x\leq a$ and hence $x\leq z$ and thus $x\in \st{z}$. Therefore $\st{\sigma_1}\subseteq \st{z}$. Hence, $\st{\sigma}\subseteq \st{z}$. Note that $z\in \st{\sigma}$ since for all $a\in\sigma_1$ and for all $a'\in\sigma_2$ we have that $z\leq a \leq y \leq a'$.

Suppose now that $\sigma_1$ has a join in $P$ and let $z$ be that join. As in the previous case we will prove that $\st{\sigma_1}\subseteq \st{z}$. Let $x\in \st{\sigma_1}$. If there exists $a\in\sigma_1$ such that $x\leq a$ then $x\leq z$ and thus $x\in \st{z}$. Otherwise, for all $a\in\sigma_1$ we have that $x\geq a$ and hence $x\geq z$ and thus $x\in \st{z}$. Therefore $\st{\sigma_1}\subseteq \st{z}$. It follows that $\st{\sigma}\subseteq \st{z}$. Note that $z\in \st{\sigma}$ since for all $a\in\sigma_1$ and for all $a'\in\sigma_2$ we have that $a\leq z \leq y \leq a'$.
\end{proof}

\subsection*{Alexandroff spaces}

Alexandroff spaces are topological spaces which satisfy that any intersection of open subsets is an open subset. Alexandroff proved in \cite{alexandroff1937diskrete} that there is a bijective correspondence between preorder relations on a set $X$ and Alexandroff topologies on $X$, under which down-sets correspond to open sets. Moreover, this correspondence induces a bijection between partial orders and Alexandroff topologies which satisfy the T$_0$ separation axiom. In addition, under the previous correspondence, order-preserving maps between preordered sets correspond to continuous maps between the associated topological spaces. 

Recall also that Alexandroff spaces are locally path-connected and thus the connected components of an Alexandroff space coincide with its path-connected components. Moreover, the connected components of an Alexandroff space coincide with the connected components of the preordered set associated to it.

Following Alexandroff's correspondence, from now on we will regard any poset as an Alexandroff T$_0$--space and any order-preserving map between posets as a continuous map without further notice.

Let $P$ and $Q$ be posets and let $f,g\colon P\to Q$ be order-preserving maps such that $f\leq g$. It is not difficult to verify that the map $H\colon P\times [0,1] \to Q$ defined by
\begin{displaymath}
H(x,t) = \left\{\begin{array}{cl} f(x) & \textnormal{if $t<1$,} \\ g(x) & \textnormal{if $t=1$,} \end{array}
 \right.
\end{displaymath}
is continuous, and hence a homotopy between $f$ and $g$ relative to the set $\{x\in P \mid f(x)=g(x)\}$. As a consequence, if $f,g\colon P\to Q$ are order-preserving maps such that, for each $x\in P$, $f(x)$ and $g(x)$ are comparable, then 
$f$ and $g$ are homotopic maps since both of them are comparable to the order-preserving map $h=\max\{f,g\}$. Hence, if $P$ is a poset which satisfies that there exists $a\in P$ such that $P=\Sts_P(a)$ then $P$ is contractible since the identity map of $P$ is homotopic to the constant map with value $a$. In particular, every poset with a maximum or a minimum element is contractible.

In \cite{mccord1966singular}, McCord proved that if $P$ is a poset then there exists a weak homotopy equivalence $\mu_P\colon|\K(P)|\to P$, that is, a continuous map which induces isomorphisms on all the homotopy groups for any choice of the basepoint (and hence, it also induces isomorphisms on all the homology and cohomology groups). Moreover, he proves that if $f\colon P\to Q$ is an order-preserving map between posets then there is a commutative diagram
\begin{center}
\begin{tikzpicture}[x=2.7cm,y=2.2cm]
\draw (0,1) node (KP) {$|\K(P)|$};
\draw (1,1) node (KQ) {$|\K(Q)|$};
\draw (0,0) node (P) {$P$};
\draw (1,0) node (Q) {$Q$};

\draw[->] (P)--(Q) node [midway,below] {$f$};
\draw[->] (KP)--(KQ) node [midway,above] {$|\K(f)|$};
\draw[->] (KP)--(P) node [midway,left] {$\mu_P$};
\draw[->] (KQ)--(Q) node [midway,right] {$\mu_Q$};
\end{tikzpicture}
\end{center}
It follows that if the map $f$ is a weak homotopy equivalence then $|\K(f)|$ is a homotopy equivalence. In addition, in the same article, McCord also proved that if $K$ is a simplicial complex then there exists a weak homotopy equivalence $|K|\to \X(K)$.

\smallskip

Let $X$ and $Y$ be topological spaces. We say that $X$ and $Y$ are \emph{weak homotopy equivalent} if there exist $n\in \N$ and topological spaces $X_0,X_1,\ldots,X_n$ such that $X_0=X$, $X_n=Y$ and such that for each $j\in\{1,2,\ldots,n\}$ there exists either a weak homotopy equivalence $X_{j-1}\to X_j$ or a weak homotopy equivalence $X_j \to X_{j-1}$. Note that, if $P$ is a poset, then $P$ and $P^{\op}$ are weak homotopy equivalent by the results of McCord.

We say that a topological space $X$ is \emph{weakly contractible} if the only map from $X$ to the singleton is a weak homotopy equivalence, that is, if all the homotopy groups of $X$ are trivial. From the results of McCord it follows that a poset $P$ is weakly contractible if and only if $|\K(P)|$ is contractible. Recall also that a finite poset which is weakly contractible has the fixed point property (cf. \cite[Theorem 2.1]{baclawski1979fixed}).

\section{The crosscut poset}
\label{section_crosscut}

In this section we introduce the crosscut poset and show how it is related to the crosscut complex. By means of the crosscut poset we will obtain generalizations of results of \cite{baclawski1979fixed}, \cite{barmak2011quillen} and \cite{bjorner1981homotopy}, together with some applications to the fixed point property and the fixed simplex property.

\begin{notat}
Let $S$ be a set. The power set of $S$ will be denoted by $\mathcal{P}(S)$ and the set $\mathcal{P}(S)-\{\varnothing \}$ will be denoted by $\mathcal{P}_{\neq\varnothing}(S)$. In addition, the set whose elements are the finite non-empty subsets of $S$ will be denoted by $\mathcal{P}_{f,\neq \varnothing}(S)$.
\end{notat}

\begin{definition}
Let $P$ be a poset and let $X$ be a subset of $P$. We define the \emph{crosscut poset of $P$ with respect to $X$} as the subposet of $(\mathcal{P}_{\neq\varnothing}(P),\subseteq)$ whose elements are the connected components of the non-empty subposets $\st{A}$ with $A\in \mathcal{P}_{\neq\varnothing}(X)$. It will be denoted by $\Gm(P,X)$.
\end{definition}

\begin{ex}
Let $P$ be the poset defined by the following Hasse diagram
\begin{center}
\begin{tikzpicture}
\tikzstyle{every node}=[font=\scriptsize]
\foreach \x in {0,1} \draw (\x,0) node(\x){$\bullet$} node[below=1]{\x}; 
\draw (0,1) node (2){$\bullet$} node[left=1]{2};
\draw (1,1) node (3){$\bullet$} node[right=1]{3};
\draw (2,1) node (4){$\bullet$} node[right=1]{4};
\foreach \x in {5,6,7} \draw (\x-5,2) node(\x){$\bullet$} node[above=1]{\x}; 
  
\foreach \x in {2,3} \draw (0)--(\x);
\foreach \x in {2,3} \draw (1)--(\x);
\foreach \x in {5,6,7} \draw (2)--(\x);
\foreach \x in {5,6,7} \draw (3)--(\x);
\foreach \x in {6,7} \draw (4)--(\x);
\end{tikzpicture}
\end{center}
Note that $\st{\{5,6\}}=\st{\{5,7\}}=\st{\{5,6,7\}}=\{0,1,2,3\}$, which is connected, and that the connected components of $\st{\{6,7\}}$ are $\{0,1,2,3\}$ and $\{4\}$. Thus, the Hasse diagram of the poset $\Gm(P,\mxl(P))$ is
\begin{center}
\begin{tikzpicture}[x=2.3cm, y=2cm]
\draw node (U5) at (0,1) [CC] {5\\2 3 \\0 1};
\draw node (U6) at (1,1) [CC] {6\\2 3 4\\ 0 1};
\draw node (U7) at (2,1) [CC] {7\\2 3 4\\ 0 1};
\draw node (U56) at (0.5,0) [CC] {2 3\\0 1};
\draw node (U672) at (1.5,0) [CC] {4};
 
\draw [-] (U56) to (U5); 
\draw [-] (U56) to (U6);
\draw [-] (U56) to (U7);
\draw [-] (U672) to (U6);
\draw [-] (U672) to (U7);
\end{tikzpicture}
\end{center}
On the other hand, since $\{5,6,7\}$ is an astral subset of $P$ it follows that $\Gmc(P,\mxl(P))$ is a $2$--simplex. 

Observe also that $\Gmc(P,\mxl(P))=\Gmc(P-\{4\},\mxl(P))$ but $\Gm(P-\{4\},\mxl(P))$ is not isomorphic to $\Gm(P,\mxl(P))$ since  the Hasse diagram of the poset $\Gm(P-\{4\},\mxl(P))$ is
\begin{center}
\begin{tikzpicture}[x=2.3cm, y=2cm]
\draw node (U5) at (0,1) [CC] {5\\2 3 \\0 1};
\draw node (U6) at (1,1) [CC] {6\\2 3 \\ 0 1};
\draw node (U7) at (2,1) [CC] {7\\2 3 \\ 0 1};
\draw node (U56) at (1,0) [CC] {2 3\\0 1};
 
\draw [-] (U56) to (U5); 
\draw [-] (U56) to (U6);
\draw [-] (U56) to (U7);
\end{tikzpicture}
\end{center}
\end{ex}

Judging by the previous example, it seems that the crosscut poset is a finer invariant than the crosscut complex.
We will prove that this is indeed true under mild assumptions. To this end we will need the following lemma, which gives a characterization of the maximal elements of the crosscut poset.

\begin{lemma}
\label{lemma_mxl_of_Gm}
Let $P$ be a poset and let $X$ be an antichain of $P$. Then $\mxl(\Gm(P,X)) = \{\st{a} \mid a\in X \}$.
\end{lemma}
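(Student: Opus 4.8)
The plan is to establish the two inclusions $\{\st{a} \mid a\in X\} \subseteq \mxl(\Gm(P,X))$ and $\mxl(\Gm(P,X)) \subseteq \{\st{a} \mid a\in X\}$ separately, both resting on one simple observation: for each $a\in X$ the subposet $\st{a}$ is connected and hence is itself an element of $\Gm(P,X)$. This holds because every $x\in\st{a}$ is comparable to $a$, so $\{x,a\}$ is a chain and $x$ lies in the connected component of $a$; thus $\st{a}$ is connected, i.e.\ it is its own unique connected component, and since $\{a\}\in\mathcal{P}_{\neq\varnothing}(X)$ and $\st{\{a\}}=\st{a}\neq\varnothing$, it follows that $\st{a}\in\Gm(P,X)$. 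Note that this observation does not use that $X$ is an antichain.

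For the inclusion $\mxl(\Gm(P,X)) \subseteq \{\st{a} \mid a\in X\}$, I would take a maximal element $C$ of $\Gm(P,X)$; by definition $C$ is a connected component of $\st{A}$ for some non-empty $A\subseteq X$. Picking any $a\in A$ and using $\st{A}=\bigcap_{a'\in A}\st{a'}\subseteq\st{a}$, one gets $C\subseteq\st{A}\subseteq\st{a}$, and since $\st{a}\in\Gm(P,X)$ and $C$ is maximal, $C=\st{a}$. This direction again does not need the antichain hypothesis.

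It then remains to show each $\st{a}$ with $a\in X$ is maximal. I would suppose $\st{a}\subseteq C$ with $C\in\Gm(P,X)$, say $C$ a connected component of $\st{A}$ for some non-empty $A\subseteq X$. Then for every $a'\in A$ we have $\st{a}\subseteq C\subseteq\st{A}\subseteq\st{a'}$, so in particular $a\in\st{a}\subseteq\st{a'}$, which means $a$ is comparable to $a'$; since $X$ is an antichain this forces $a=a'$. Hence $A=\{a\}$, so $C$ is a connected component of the connected poset $\st{a}$, whence $C=\st{a}$, proving maximality.

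I do not anticipate a real obstacle here; the only point needing a little care is the initial verification that $\st{a}$ is connected — so that it contributes a single element to the crosscut poset rather than several — and keeping track of exactly where the antichain hypothesis enters, namely in ruling out a strict containment $\st{a}\subsetneq\st{a'}$ for distinct $a,a'\in X$.
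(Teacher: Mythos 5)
Your proposal is correct and follows essentially the same argument as the paper: both directions use that $\st{a}$ is connected (hence an element of $\Gm(P,X)$), invoke the antichain hypothesis only to force $A=\{a\}$ when $\st{a}\subseteq C$, and conclude the reverse inclusion by picking any $a\in A$ with $C\subseteq\st{A}\subseteq\st{a}$ and appealing to maximality. Your write-up is slightly more explicit about why $\st{a}$ is connected, but the proof is the same.
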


\begin{proof}
Let $a\in X$. Since $\st{a}$ is connected, $\st{a}\in \Gm(P,X)$. Let $C\in \Gm(P,X)$ such that $C\supseteq \st{a}$ and let $A$ be a non-empty subset of $X$ such that $C$ is a connected component of $\st{A}$. Then $\st{a}\subseteq C\subseteq \st{A}$. It follows that $a\in \st{x}$ for all $x\in A$. Since $X$ is an antichain we obtain that $a = x$ for all $x\in A$. Hence $A=\{a\}$ and $\st{a} = \st{A} = C$. Therefore, $\st{a}\in \mxl(\Gm(P,X))$.

Now let $C\in \mxl(\Gm(P,X))$. Let $A$ be a non-empty subset of $X$ such that $C$ is a connected component of $\st{A}$. Let $a\in A$. Then $C\subseteq \st{a}$. Hence $C=\st{a}$.
\end{proof}

\begin{prop} \label{prop_finer_invariant}
Let $P$ and $P'$ be posets, let $X$ be an antichain of $P$ and let $X'$ be an antichain of $P'$. If $\Gm(P,X)$ is isomorphic to $\Gm(P',X')$ then $\Gmc(P,X)$ is isomorphic to $\Gmc(P',X')$.
\end{prop}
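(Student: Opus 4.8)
The plan is to show that the crosscut complex $\Gmc(P,X)$ can be recovered, up to isomorphism, from the abstract poset $\Gm(P,X)$ alone, so that an isomorphism of crosscut posets automatically yields an isomorphism of crosscut complexes. First I would pin down the vertices. By Lemma \ref{lemma_mxl_of_Gm}, $\mxl(\Gm(P,X)) = \{\st{a} \mid a\in X\}$, and the assignment $a\mapsto\st{a}$ is a bijection from $X$ onto this set (it is injective because $a\in\st{a}$ and $X$ is an antichain). Since the vertex set of $\Gmc(P,X)$ is exactly $X$ --- every singleton $\{a\}$ with $a\in X$ is astral --- an isomorphism $\phi\colon\Gm(P,X)\to\Gm(P',X')$ restricts to a bijection $\mxl(\Gm(P,X))\to\mxl(\Gm(P',X'))$ and hence induces a bijection $f\colon X\to X'$ determined by $\phi(\st{a})=\st{f(a)}$.

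The key step is the following internal characterization of the simplices: a finite non-empty $A\subseteq X$ is astral if and only if the set $\{\st{a}\mid a\in A\}\subseteq\mxl(\Gm(P,X))$ admits a lower bound in $\Gm(P,X)$. Indeed, if $A$ is astral then $\st{A}\neq\varnothing$, and any connected component $C$ of $\st{A}$ is an element of $\Gm(P,X)$ with $C\subseteq\st{A}\subseteq\st{a}$ for every $a\in A$, so $C$ is such a lower bound; conversely, if $C\in\Gm(P,X)$ satisfies $C\subseteq\st{a}$ for all $a\in A$, then $\varnothing\neq C\subseteq\bigcap_{a\in A}\st{a}=\st{A}$, so $A$ is astral. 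Here one uses that, by definition, the elements of $\Gm(P,X)$ are non-empty subsets of $P$. This condition refers only to the order of $\Gm(P,X)$ and to its set of maximal elements, so it is preserved by $\phi$.

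Putting these together: for finite non-empty $A\subseteq X$ one has $A\in\Gmc(P,X)$ iff $\{\st{a}\mid a\in A\}$ has a lower bound in $\Gm(P,X)$ iff $\{\st{f(a)}\mid a\in A\}=\{\st{b}\mid b\in f(A)\}$ has a lower bound in $\Gm(P',X')$ iff $f(A)\in\Gmc(P',X')$; hence $f$ is an isomorphism of simplicial complexes, which proves the proposition. The argument is essentially formal, and I do not expect a genuine obstacle; the only point requiring care --- and the reason the antichain hypothesis cannot be dropped --- is that the maximal elements of $\Gm(P,X)$ must be exactly the stars $\st{a}$ with $a\in X$, so that ``being a vertex of the crosscut complex'' is an order-theoretic property of $\Gm(P,X)$, which is precisely what Lemma \ref{lemma_mxl_of_Gm} provides.
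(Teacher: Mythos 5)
Your proposal is correct and follows essentially the same route as the paper: Lemma \ref{lemma_mxl_of_Gm} gives the vertex bijection via the maximal elements, and a connected component of $\st{A}$ serves as the order-theoretic witness that $A$ is astral. The only cosmetic difference is that you package this witness as a biconditional (``has a lower bound among the chosen maximal elements''), so both directions come at once, whereas the paper proves one implication and then applies the same argument to $\varphi^{-1}$.
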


\begin{proof}
Let $\varphi\colon \Gm(P,X) \to \Gm(P',X')$ be an isomorphism. Let $\iota\colon X \to \Gm(P,X)$ and $\iota'\colon X' \to \Gm(P',X')$ be given by $\iota(x)=\st{x}$ for all $x\in X$ and $\iota'(x')=\st{x'}$ for all $x'\in X'$. Note that $\iota$ and $\iota'$ are injective maps since $X$ and $X'$ are antichains. From Lemma \ref{lemma_mxl_of_Gm} it follows that $\iota$ and $\iota'$ induce bijections $X\cong \mxl(\Gm(P,X))$ and $X'\cong \mxl(\Gm(P',X'))$, respectively. Since $\varphi$ is an isomorphism, $\varphi$ induces a bijection  $\mxl(\Gm(P,X))\cong \mxl(\Gm(P',X'))$. Therefore, there exists a bijective map $\widetilde\varphi\colon X\to X'$ such that $\iota'(\widetilde\varphi(x))=\varphi(\iota(x))$ for all $x\in X$.

We will prove now that $\widetilde\varphi$ defines a morphism of simplicial complexes $\Gmc(P,X) \to \Gmc(P',X')$. Let $\sigma$ be a simplex of $\Gmc(P,X)$, that is, a non-empty finite astral subset of $X$. Then $\st{\sigma}\neq\varnothing$. Let $C$ be a connected component of $\st{\sigma}$. Then $C\subseteq \st{a}$ for all $a\in\sigma$. Hence, for each $a\in \sigma$ we have that
\begin{align*}
\varphi(C) &\subseteq \varphi(\st{a}) = \varphi(\iota(a)) = \iota'(\widetilde\varphi(a)) = \st{\widetilde\varphi(a)}.
\end{align*}
Thus, $\varphi(C)\subseteq \st{\widetilde\varphi(\sigma)}$ and hence $\st{\widetilde\varphi(\sigma)}\neq\varnothing$ which implies that $\widetilde\varphi(\sigma)$ is an astral subset of $X'$. Therefore, $\widetilde\varphi$ defines a morphism of simplicial complexes $\Gmc(P,X) \to \Gmc(P',X')$.

Applying the previous argument to $\varphi^{-1}$ we obtain a morphism of simplicial complexes $\Gmc(P',X') \to \Gmc(P,X)$, which is the inverse of $\widetilde\varphi$.
\end{proof}

We mention that the previous proposition does not hold if either $X$ or $X'$ is not an antichain. Similarly, Lemma \ref{lemma_mxl_of_Gm} might not hold if $X$ is not an antichain. In Example \ref{ex_counterexamples} we provide simple counterexamples.

\begin{ex} \label{ex_counterexamples}
Let $P$ be the poset with elements $0$ and $1$ such that $0<1$ and let $P'$ be a one-element poset. Then $\Gm(P,P)$ and $\Gm(P',P')$ are one-element posets but $\Gmc(P,P)$ is a $1$--simplex and $\Gmc(P',P')$ is a simplicial complex with a unique vertex.

On the other hand, let $Q$ be the poset with elements $a$, $b$ and $c$ with non-trivial relations $a<b$ and $a<c$. Let $X=\{a,b\}$. Then $\st{b}=\{a,b\}$ is not a maximal element of $\Gamma(Q,X)$ since it is properly contained in $\st{a}=Q$.
\end{ex}

\begin{definition}
Let $P$ be a poset and let $X$ and $B$ be subsets of $P$. We define 
\begin{displaymath}
\I_X(B)=\{x\in X \mid B\subseteq \st{x} \}.
\end{displaymath}
\end{definition}

\begin{rem} \label{rem_astral}
Let $P$ be a poset, let $X$ be a subset of $P$ and let $D$ be a non-empty subset of $P$. For each $d\in D$ and for each $x\in \I_X(D)$ we have that $d\in \st{x}$, or equivalently, $x\in \st{d}$. Therefore, $\I_X(D)\subseteq \st{D}$. In particular, every subset of $\I_X(D)$ is an astral subset of $P$.
\end{rem}

The following proposition states a key property of the crosscut poset.

\begin{prop} \label{prop_connected_subset}
Let $P$ be a poset and let $X$ be a subset of $P$. Let $B\subseteq P$ be a connected non-empty subposet and let $\Gamma_B=\{C\in \Gm(P,X) \mid B\subseteq C \}$. If $\Gamma_B$ is non-empty, then $\I_X(B)\neq \varnothing$ and $\Gamma_B$ has a minimum element, which is the connected component of $\st{\I_X(B)}$ that contains $B$.

In particular, if $B\in\Gm(P,X)$ then $\I_X(B)\neq \varnothing$ and $B$ is a connected component of $\st{\I_X(B)}$.
\end{prop}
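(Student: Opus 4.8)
The plan is to produce the connected component $C_0$ of $\st{\I_X(B)}$ containing $B$ and to show that it is the minimum of $\Gamma_B$. First I would extract information from the hypothesis $\Gamma_B\neq\varnothing$: choose $C\in\Gamma_B$, so $B\subseteq C$ and $C$ is a connected component of $\st{A}$ for some $A\in\mathcal{P}_{\neq\varnothing}(X)$. Then $B\subseteq C\subseteq\st{A}$ forces every $a\in A$ to satisfy $B\subseteq\st{a}$, i.e. $A\subseteq\I_X(B)$; since $A\neq\varnothing$ this already gives $\I_X(B)\neq\varnothing$. Writing $Y=\I_X(B)$, each $b\in B$ lies in $\st{x}$ for all $x\in Y$ by definition of $\I_X$, so $B\subseteq\st{Y}$ (which is defined since $Y\neq\varnothing$); as $B$ is connected and non-empty, it is contained in a unique connected component $C_0$ of $\st{Y}$. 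Since $Y\in\mathcal{P}_{\neq\varnothing}(X)$, this $C_0$ is an element of $\Gm(P,X)$, and $B\subseteq C_0$, so $C_0\in\Gamma_B$.

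Next I would check that $C_0$ is below every element of $\Gamma_B$. Take $C\in\Gamma_B$ arbitrary, realized as a connected component of $\st{A}$ with $A\in\mathcal{P}_{\neq\varnothing}(X)$ as above; the inclusion $A\subseteq Y$ established in the previous step yields $\st{Y}\subseteq\st{A}$, hence $C_0\subseteq\st{A}$. Now $C_0$ is a connected subset of $\st{A}$ which meets $C$ (both contain the non-empty set $B$), and $C$ is a connected component of $\st{A}$; therefore $C_0\subseteq C$. This holds for every $C\in\Gamma_B$, so $C_0=\min\Gamma_B$, and by construction $C_0$ is the connected component of $\st{\I_X(B)}$ that contains $B$.

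Finally, for the last assertion, if $B\in\Gm(P,X)$ then $B$ is connected and non-empty and $B\in\Gamma_B$, so $\Gamma_B\neq\varnothing$ and the first part applies: $\I_X(B)\neq\varnothing$ and the minimum $C_0$ of $\Gamma_B$ satisfies $C_0\subseteq B$, while $B\subseteq C_0$ by construction; hence $B=C_0$ is the connected component of $\st{\I_X(B)}$ containing $B$. I do not expect a serious obstacle: the only points requiring care are the standard fact that a connected subspace meeting a connected component is contained in it, and the verification that $\I_X(B)$ is a legitimate non-empty subset of $X$, so that $C_0$ genuinely belongs to $\Gm(P,X)$.
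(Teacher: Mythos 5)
Your proposal is correct and follows essentially the same route as the paper's proof: establish $\I_X(B)\neq\varnothing$ from a witness $C\in\Gamma_B$ realized as a component of some $\st{A}$, take $C_0$ to be the component of $\st{\I_X(B)}$ containing $B$, and then use $A\subseteq\I_X(B)$, hence $\st{\I_X(B)}\subseteq\st{A}$, together with the fact that a connected set meeting a connected component lies inside it, to get $C_0\subseteq C$. No gaps.
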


\begin{proof}
Let $C_1\in\Gm(P,X)$ such that $B\subseteq C_1$ and let $A$ be a non-empty subset of $X$ such that $C_1$ is a connected component of $\st{A}$. Let $a_0\in A$. Then $B\subseteq \st{A}\subseteq \st{a_0}$. Hence $a_0\in\I_X(B)$. Thus, $\I_X(B)\neq\varnothing$. 

Clearly $B\subseteq \st{\I_X(B)}$ and, since $B$ is a connected non-empty subposet of $P$, there exists a unique connected component $C_0$ of $\st{\I_X(B)}$ such that $B\subseteq C_0$. Note that $C_0\in \Gm(P,X)$.

We will prove now that $C_0=\min\{C\in \Gm(P,X) \mid B\subseteq C \}$. Let $C\in \Gm(P,X)$ such that $B\subseteq C$. There exists a non-empty subset $A\subseteq X$ such that $C$ is a connected component of $\st{A}$. Hence $B\subseteq \st{a}$ for all $a\in A$. Then, $A\subseteq \I_X(B)$ and thus $\st{\I_X(B)}\subseteq \st{A}$. Hence $C_0\subseteq \st{A}$. Since $C$ is a connected component of $\st{A}$ and $\varnothing\neq B\subseteq C_0 \cap C$, it follows that $C_0\subseteq C$.
\end{proof}

We will now show how the crosscut poset is related to the crosscut complex. To this end, we need the following lemma.

\begin{lemma} \label{lemma_power_set_weakly_contractible}
Let $B$ be a non-empty set. Then the poset $(\mathcal{P}_{f,\neq \varnothing}(B),\subseteq)$ is weakly contractible.
\end{lemma}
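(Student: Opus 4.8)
The plan is to exhibit an explicit contracting homotopy, so that $(\mathcal{P}_{f,\neq \varnothing}(B),\subseteq)$ is in fact contractible as a topological space and hence, a fortiori, weakly contractible. Since $B$ is non-empty we may fix an element $b_0\in B$. I would then consider the map $g\colon \mathcal{P}_{f,\neq \varnothing}(B)\to \mathcal{P}_{f,\neq \varnothing}(B)$ defined by $g(S)=S\cup\{b_0\}$. The first step is to check that $g$ is well defined, since for every finite non-empty $S\subseteq B$ the set $S\cup\{b_0\}$ is again finite and non-empty, and that $g$ is order-preserving, since $S\subseteq T$ implies $S\cup\{b_0\}\subseteq T\cup\{b_0\}$.

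Next I would compare $g$ with two other order-preserving self-maps of $\mathcal{P}_{f,\neq \varnothing}(B)$, namely the identity $\id$ and the constant map $c$ with value $\{b_0\}$. For every $S\in\mathcal{P}_{f,\neq \varnothing}(B)$ we have $S\subseteq S\cup\{b_0\}$ and $\{b_0\}\subseteq S\cup\{b_0\}$; that is, $\id(S)$ and $g(S)$ are comparable, and $c(S)$ and $g(S)$ are comparable. By the homotopy criterion recalled in the preliminaries (pointwise comparable order-preserving maps are homotopic), it follows that $\id$ is homotopic to $g$ and $g$ is homotopic to $c$, hence $\id$ is homotopic to the constant map $c$. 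Therefore $(\mathcal{P}_{f,\neq \varnothing}(B),\subseteq)$ is contractible, and in particular weakly contractible.

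Since there are no estimates or delicate constructions involved, no step is a genuine obstacle; the only point requiring a moment's care is that the argument goes through unchanged when $B$ is infinite, which works precisely because the elements of $\mathcal{P}_{f,\neq \varnothing}(B)$ are finite subsets and adjoining a single point preserves finiteness. Alternatively, one could observe that $(\mathcal{P}_{f,\neq \varnothing}(B),\subseteq)$ is the poset of simplices $\X(\Delta)$ of the (possibly infinite-dimensional) simplex $\Delta$ on the vertex set $B$; since $\Delta$ is a cone with apex any fixed vertex its geometric realization is contractible, and McCord's weak homotopy equivalence $|\Delta|\to\X(\Delta)$ then gives the claim. I would, however, prefer the direct homotopy above, as it is shorter and yields the stronger conclusion of contractibility.
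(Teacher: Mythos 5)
Your proof is correct, but it takes a genuinely different route from the paper's. You build an explicit contraction: the order-preserving map $g(S)=S\cup\{b_0\}$ satisfies $\id\leq g$ and $c\leq g$ for the constant map $c$ with value $\{b_0\}$, so by the comparability criterion from the preliminaries $\id\simeq g\simeq c$, and the poset is honestly contractible --- a conclusion strictly stronger than the weak contractibility claimed in the lemma (and your fallback via $\X(\Delta)$ and McCord is also sound). The paper instead argues weak contractibility directly: it checks connectedness, and then, given a map $f\colon S^n\to\mathcal{P}_{f,\neq\varnothing}(B)$, uses compactness of $\im f$ together with the fact that each point of the poset has a finite minimal open neighbourhood to conclude that $\im f$ is finite, hence contained in $\Pc_{\leq M}$ for $M=\bigcup_{A\in\im f}A$, which has a maximum and is therefore contractible. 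What each approach buys: yours is shorter, needs no compactness argument, and yields actual contractibility via the single closure operator $S\mapsto S\cup\{b_0\}$; the paper's argument is an instance of a more general template (a directed poset whose points have finite down-sets is weakly contractible) that does not depend on having a distinguished element to adjoin, though in the present situation that generality is not needed. There is no gap in your argument --- the homotopy $H$ from the preliminaries is continuous for arbitrary (not necessarily finite) posets, and adjoining $b_0$ preserves finiteness and non-emptiness, so $g$ is well defined.
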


\begin{proof}
Let $\Pc$ denote the poset $(\mathcal{P}_{f,\neq \varnothing}(B),\subseteq)$. Note that $\Pc$ is connected since, if $A_1,A_2\in \Pc$ then $A_1\cup A_2 \in \Pc$.

Now, let $n\in\N$, let $S^n$ denote the $n$--sphere and let $f\colon S^n \to \Pc$ be a continuous map. Since $\im f$ is compact and $\Pc$ is locally finite then $\im f$ is a finite subset of $\Pc$. Let $M=\bigcup\limits_{A\in \im f}A$. Note that $M\in \Pc$ and that $\im f\subseteq \Pc_{\leq M}$. Hence the continuous map $f$ can be factorized through the contractible poset $\Pc_{\leq M}$. Thus $f$ is nullhomotopic.

Therefore, $\Pc$ is weakly contractible.
\end{proof}

The following proposition exhibits the relationship between the crosscut poset and the crosscut complex.

\begin{prop} \label{prop_crosscut_poset_and_complex}
Let $P$ be a poset and let $X$ be a coherent cutset of $P$. Then $\X(\Gmc(P,X))$ and $\Gm(P,X)$ are weak homotopy equivalent. In particular, $|\K(\Gm(P,X))|$ and $|\Gmc(P,X)|$ are homotopy equivalent.
\end{prop}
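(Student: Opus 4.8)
The plan is to use McCord's theorem together with a comparison of order complexes via a Quillen-type fibre argument. Recall from McCord that for any simplicial complex $K$ there is a weak homotopy equivalence $|K|\to \X(K)$, and for any poset $Q$ a weak homotopy equivalence $|\K(Q)|\to Q$; moreover $|\K(f)|$ is a homotopy equivalence whenever $f$ is a weak homotopy equivalence. So it suffices to produce an order-preserving map between $\X(\Gmc(P,X))$ and $\Gm(P,X)$ (in either direction) and show it is a weak homotopy equivalence; the final clause about $|\K(\Gm(P,X))|$ and $|\Gmc(P,X)|$ then follows by combining the three maps.

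First I would define a map $f\colon \Gm(P,X)\to \X(\Gmc(P,X))$ as follows: given a connected component $C$ of some $\st{A}$ with $A\in\Pc_{\neq\varnothing}(X)$, send $C$ to $\I_X(C)$, the set of $x\in X$ with $C\subseteq \st{x}$. By Remark \ref{rem_astral}, $\I_X(C)\subseteq\st{C}$, so $\I_X(C)$ is astral; to land in $\X(\Gmc(P,X))$ I need $\I_X(C)$ nonempty and finite — nonemptiness is the ``in particular'' part of Proposition \ref{prop_connected_subset}, while finiteness needs the additional observation that $\I_X(C)$ is a subset of $X$ all of whose elements are comparable to any fixed point of $\st{C}$, together with the coherent-cutset hypothesis (or one restricts attention to the finite astral subsets that actually arise — in fact one should check whether $\Gmc(P,X)$ here is the complex of \emph{finite} astral subsets, and if $\I_X(C)$ can be infinite, replace $f(C)$ by a suitable finite astral subset; I expect one shows $\I_X(C)$ is itself finite because $C$ meets $X$ in at most ... — this is a point to be careful about). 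The map $f$ is clearly order-reversing ($C\subseteq C'$ forces $\I_X(C)\supseteq\I_X(C')$), so I would instead work with $\Gm(P,X)^{\op}$ or equivalently use that $Q$ and $Q^{\op}$ are weak homotopy equivalent; alternatively define the map in the opposite direction. Let me therefore define $g\colon \X(\Gmc(P,X))\to \Gm(P,X)$... but a simplex $\sigma$ (a finite astral subset of $X$) corresponds to possibly several components of $\st{\sigma}$, so there is no canonical choice — this confirms that the natural map goes $\Gm(P,X)\to\X(\Gmc(P,X))$, and I should compose with the order-reversal, i.e.\ exhibit a weak homotopy equivalence $\Gm(P,X)\to \X(\Gmc(P,X))^{\op}$, using $\X(K)\simeq\X(K)^{\op}$ (weak homotopy equivalence of a poset with its opposite).

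To show $f$ is a weak homotopy equivalence I would apply Quillen's Theorem A (the poset/Alexandroff-space version): it suffices to check that for every simplex $\sigma\in\X(\Gmc(P,X))$ the fibre $f^{-1}\big((\X(\Gmc(P,X)))_{\leq\sigma}\big) = \{C\in\Gm(P,X)\mid \I_X(C)\supseteq\sigma\} = \{C\in\Gm(P,X)\mid C\subseteq\st{\sigma}\}$ (using Remark \ref{rem_astral}, $\sigma\subseteq\I_X(C)$ iff $C\subseteq\st{\sigma}$) is weakly contractible; here I use the appropriate variance so the fibre is a down-set. This fibre is precisely the set of components $C\in\Gm(P,X)$ with $C\subseteq\st{\sigma}$, and since $\sigma$ is astral, by Lemma \ref{lemma_astral_bjorner} there is $z\in\st{\sigma}$ with $\st{\sigma}\subseteq\st{z}$, so $z$ lies in a single component $C_z$ of $\st{\sigma}$; I claim $C_z$ is the minimum of the fibre. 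Indeed by Proposition \ref{prop_connected_subset} applied to each connected $C$ in the fibre, the minimum component containing $C$ is the component of $\st{\I_X(C)}$ through $C$, and since $\I_X(C)\supseteq\sigma\ni$ enough elements... — more directly, for any $C$ in the fibre, $z\in\st{\sigma}$ and the component containing $C$: because every point of $C\subseteq\st{\sigma}\subseteq\st{z}$ is comparable to $z$, the subposet $C\cup\{z\}$ is connected, so $z\in C$, whence $C=C_z$. Thus the fibre is the single element $\{C_z\}$, which is contractible. Hence by Quillen's Theorem A, $f$ is a weak homotopy equivalence.

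The main obstacle I anticipate is twofold: (i) making the variance bookkeeping correct — the natural map is order-reversing, so one must everywhere pass to opposites and invoke $Q\simeq Q^{\op}$, which is harmless but must be stated cleanly; and (ii) the finiteness issue for $\I_X(C)$, i.e.\ ensuring $f(C)$ is genuinely a simplex of $\Gmc(P,X)$ (a \emph{finite} astral subset) — if $\I_X(C)$ need not be finite one must argue that the fibre computation still goes through with the complex of all astral subsets, or restrict $f$ appropriately; I expect the cleanest route is to note that the fibre over a \emph{finite} simplex $\sigma$ still has minimum $C_z$ by exactly the argument above, so one can run Quillen's Theorem A directly on the order-preserving map $C\mapsto \I_X(C)$ into $\X(\Gmc(P,X))^{\op}$ restricted to finite astral subsets, these being cofinal. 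The final sentence of the proposition is then immediate: $|\K(\Gm(P,X))|\xrightarrow{\mu}\Gm(P,X)$, the weak equivalence $\Gm(P,X)\to\X(\Gmc(P,X))^{\op}$, $\X(\Gmc(P,X))^{\op}\simeq\X(\Gmc(P,X))$, and $|\Gmc(P,X)|\to\X(\Gmc(P,X))$ are all weak homotopy equivalences between spaces of the homotopy type of CW complexes, hence by Whitehead's theorem the induced map $|\K(\Gm(P,X))|\to|\Gmc(P,X)|$ is a genuine homotopy equivalence.
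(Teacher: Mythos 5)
There is a genuine gap here, and it originates at the fork where you chose the wrong branch. You dismiss a map $\X(\Gmc(P,X))\to\Gm(P,X)$ on the grounds that a finite astral $\sigma\subseteq X$ ``corresponds to possibly several components of $\st{\sigma}$, so there is no canonical choice.'' For a \emph{coherent} cutset this is exactly what Lemma \ref{lemma_astral_bjorner} rules out: it produces $z\in\st{\sigma}$ with $\st{\sigma}=\Sts_{\st{\sigma}}(z)$, so $\st{\sigma}$ is connected and is itself an element of $\Gm(P,X)$. The paper's proof is precisely the order-reversing map $\nu(\sigma)=\st{\sigma}$ into $\Gm(P,X)^{\op}$, whose fibre over $D$ is $\{\sigma\mid\st{\sigma}\supseteq D\}=\mathcal{P}_{f,\neq\varnothing}(\I_X(D))$; this is weakly contractible because it is directed under union (Lemma \ref{lemma_power_set_weakly_contractible} and Proposition \ref{prop_connected_subset}), and McCord's theorem finishes. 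Having discarded this route, you are forced onto $C\mapsto\I_X(C)$, whose well-definedness you flag twice but never establish --- and it genuinely fails: if $P$ consists of a single element $b$ lying below an infinite antichain $X$, then $X$ is a coherent cutset, $\{b\}=\st{\{x,x'\}}\in\Gm(P,X)$ for distinct $x,x'\in X$, yet $\I_X(\{b\})=X$ is infinite and hence not a simplex of $\Gmc(P,X)$. The closing suggestion (``restricted to finite astral subsets, these being cofinal'') is not an argument; it is the unresolved problem restated.

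Independently of the well-definedness issue, your fibre computation is wrong. The fibre $\{C\in\Gm(P,X)\mid C\subseteq\st{\sigma}\}$ is in general not a singleton: in the paper's first example, with $X=\mxl(P)$ and $\sigma=\{6\}$, it contains $\st{6}$, $\{0,1,2,3\}$ and $\{4\}$. The step ``$C\cup\{z\}$ is connected, so $z\in C$'' is where this breaks: $C$ is a connected component of $\st{A}$ for some $A\subseteq X$ that need not be related to $\sigma$, and connectedness of $C\cup\{z\}$ forces $z\in C$ only if $z\in\st{A}$, which can fail (in the example, $C=\{4\}$ is a component of $\st{\{6,7\}}$ and $z=6\notin\st{7}$). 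As it happens, this fibre does have a maximum element, namely $\st{\sigma}$ itself (again by Lemma \ref{lemma_astral_bjorner}), so it is contractible --- but not for the reason you give, and a correct fibre computation cannot rescue a map that is not defined. The surrounding scaffolding (McCord's theorems, passing to opposites, assembling the final homotopy equivalence) is fine.
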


\begin{proof}
Let $\nu\colon \X(\Gmc(P,X)) \to \Gm(P,X)^{\op}$ be defined by $\nu(\sigma)=\st{\sigma}$. Note that, if $\sigma\subseteq X$ is a non-empty finite astral subset of $P$ then $\st{\sigma}$ is non-empty and, by Lemma \ref{lemma_astral_bjorner}, connected. Thus, $\nu$ is well-defined. Clearly, $\nu$ is order-preserving.

We will prove now that $\nu$ is a weak homotopy equivalence. Note that for each $D\in \Gm(P,X)^{\op}$ we have that
\begin{align*}
\nu^{-1}(\Gm(P,X)^{\op}_{\leq D}) &= \{\sigma \in \X(\Gmc(P,X)) \mid \st{\sigma} \supseteq D \} = \mathcal{P}_{f,\neq \varnothing}(\I_X(D)),
\end{align*}
since if $\sigma$ is a non-empty subset of $X$ such that $\st{\sigma} \supseteq D$ then $\sigma \subseteq \I_X(D)$, and if $\sigma \in \mathcal{P}_{f,\neq \varnothing}(\I_X(D))$ then $\sigma$ is astral by Remark \ref{rem_astral} and $\st{\sigma}\supseteq \st{\I_X(D)} \supseteq D$. Thus, for each $D\in \Gm(P,X)^{\op}$, $\nu^{-1}(\Gm(P,X)^{\op}_{\leq D})$ is weakly contractible by Lemma \ref{lemma_power_set_weakly_contractible}, since $\I_X(D)\neq\varnothing$ by Proposition \ref{prop_connected_subset}. 

Also, note that $\{\Gm(P,X)^{\op}_{\leq D} \mid D\in \Gm(P,X)^{\op}\}$ is a basis-like\footnote{An open cover $\mathcal{U}$ of a topological space $Z$ is called \emph{basis-like} if for all $U,V\in \mathcal{U}$ and for all $z\in U\cap V$, there exists $W\in \mathcal{U}$ such that $z\in W\subseteq U\cap V$.} open cover of $\Gm(P,X)^{\op}$ (it is indeed a basis for the topology of $\Gm(P,X)^{\op}$). Therefore, $\nu$ is a weak homotopy equivalence by \cite[Theorem 6]{mccord1966singular}. The result follows.
\end{proof}

The following proposition sharpens Proposition \ref{prop_crosscut_poset_and_complex} in the case that the cutset $X$ is finite. 

\begin{prop} \label{prop_strong_deformation_retract}
Let $P$ be a poset and let $X$ be a coherent finite cutset of $P$. Then $\Gm(P,X)^{\op}$ is isomorphic to a strong deformation retract of $\X(\Gmc(P,X))$.
\end{prop}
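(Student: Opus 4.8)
The plan is to promote the weak homotopy equivalence $\nu\colon \X(\Gmc(P,X)) \to \Gm(P,X)^{\op}$ of Proposition \ref{prop_crosscut_poset_and_complex} to an actual strong deformation retraction when $X$ is finite, using the combinatorial theory of simple homotopy / order-preserving retractions on finite posets. First I would observe that since $X$ is finite, $\Gmc(P,X)$ is a finite simplicial complex, so $\X(\Gmc(P,X))$ is a finite poset, and $\Gm(P,X)$ is also finite (there are finitely many subsets $A$ of $X$, and by Lemma \ref{lemma_astral_bjorner} each $\st{A}$ is connected when non-empty astral, so $\Gm(P,X)=\mathrm{mxl}(\ldots)$-style bookkeeping gives a finite poset — in any case it embeds in a finite set of subsets of a finite $\st{A}$'s... more carefully: the elements of $\Gm(P,X)$ are connected components of the finitely many posets $\st{A}$, but those posets need not be finite; however what matters is that the \emph{image} of $\nu$ is $\Gm(P,X)^{\op}$ and one shows $\Gm(P,X)$ is finite because $A\mapsto \st{A}$ takes finitely many values and each value, being astral, is connected by Lemma \ref{lemma_astral_bjorner}, so $\Gm(P,X)=\{\st{A}\mid \varnothing\neq A\subseteq X,\ \st{A}\neq\varnothing\}$, a finite set).

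Next I would exhibit a section $s\colon \Gm(P,X)^{\op}\to \X(\Gmc(P,X))$ of $\nu$ and show that the composite $s\circ\nu$ is comparable to the identity of $\X(\Gmc(P,X))$, which by the remarks on Alexandroff spaces in the Preliminaries (two order-preserving maps that are pointwise comparable are homotopic, and indeed the homotopy can be taken relative to the agreement set) yields a deformation retraction. The natural candidate is $s(D)=\I_X(D)$: this is a non-empty finite astral subset of $X$ by Remark \ref{rem_astral} and Proposition \ref{prop_connected_subset} (using $D\in\Gm(P,X)$), hence a simplex of $\Gmc(P,X)$, i.e. an element of $\X(\Gmc(P,X))$; it is order-preserving from $\Gm(P,X)^{\op}$ since $D_1\subseteq D_2$ forces $\I_X(D_2)\subseteq\I_X(D_1)$; and $\nu(s(D))=\st{\I_X(D)}=D$ because $D$ is precisely the connected component of $\st{\I_X(D)}$ in question (Proposition \ref{prop_connected_subset}) and, by Lemma \ref{lemma_astral_bjorner}, $\st{\I_X(D)}$ is connected, so it equals $D$. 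For the other composite, given a simplex $\sigma\in\X(\Gmc(P,X))$ we have $\nu(\sigma)=\st{\sigma}=:D$, and then $s(\nu(\sigma))=\I_X(D)=\I_X(\st{\sigma})$; since every $a\in\sigma$ satisfies $\st{\sigma}\subseteq\st{a}$, we get $\sigma\subseteq\I_X(\st{\sigma})$, so $\sigma\leq s(\nu(\sigma))$ in $\X(\Gmc(P,X))$. Thus $\id \leq s\circ\nu$ pointwise, and the explicit homotopy from the Preliminaries gives a homotopy $\id\simeq s\circ\nu$ relative to the fixed-point set; combined with $\nu\circ s=\id_{\Gm(P,X)^{\op}}$, this makes $\im(s)$ a strong deformation retract of $\X(\Gmc(P,X))$, and $\nu|\colon \im(s)\to\Gm(P,X)^{\op}$ an isomorphism of posets with inverse $s$.

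The main obstacle I anticipate is being careful about the passage from "homotopic via a comparison map" to "strong deformation retract" in the Alexandroff setting: one needs the homotopy $H\colon \X(\Gmc(P,X))\times[0,1]\to \X(\Gmc(P,X))$ between $\id$ and $s\circ\nu$ to be stationary on the image of $s$ (equivalently, on the subposet where $\id = s\circ\nu$), which is exactly the relative-homotopy statement recorded in the Preliminaries for pointwise-comparable maps — so I would just need to check that $s\circ\nu$ fixes $\im(s)$ pointwise, i.e. that $\I_X(\st{\I_X(D)})=\I_X(D)$ for $D\in\Gm(P,X)$. One inclusion is the general $\sigma\subseteq\I_X(\st\sigma)$ above applied to $\sigma=\I_X(D)$; the reverse holds because $\st{\I_X(D)}\supseteq D$ (Remark-level, since $\I_X(D)\subseteq\st D$) forces $\I_X(\st{\I_X(D)})\subseteq\I_X(D)$. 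With that identity in hand the retraction is genuinely stationary on the retract and the proof is complete. A secondary point to get right is the finiteness claim justifying that $\X(\Gmc(P,X))$ is a finite poset (so that "strong deformation retract" has its usual simple-homotopy flavour), which follows immediately from finiteness of $X$.
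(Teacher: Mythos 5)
Your proposal is correct and follows essentially the same route as the paper: the section is the paper's map $\iota(C)=\I_X(C)$, the identities $\nu\iota=\id$ and $\iota\nu\geq\id$ are established the same way via Proposition \ref{prop_connected_subset}, Remark \ref{rem_astral} and Lemma \ref{lemma_astral_bjorner}, and the Alexandroff relative-homotopy remark from the Preliminaries supplies the strong deformation retraction. Your explicit verification that $\I_X(\st{\I_X(D)})=\I_X(D)$ (so the homotopy is stationary on the retract) is a detail the paper leaves implicit in ``The result follows,'' and is a welcome addition; the finiteness digression is harmless but unnecessary.
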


\begin{proof}
Let $\nu\colon \X(\Gmc(P,X)) \to \Gm(P,X)^{\op}$ be the map defined in the previous proof. Let $\iota\colon \Gm(P,X)^{\op} \to \X(\Gmc(P,X))$ be defined by $\iota(C)=\I_X(C)$. Note that, for each $C\in \Gm(P,X)$, the set $\I_X(C)$ is non-empty by Proposition \ref{prop_connected_subset} and an astral subset of $P$ by Remark \ref{rem_astral}. Thus, $\iota$ is well defined. Clearly, $\iota$ is order-preserving.

Now, let $C\in\Gm(P,X)$. Since $\st{\I_X(C)}=\nu(\iota(C))\in \Gm(P,X)$ it follows that $\st{\I_X(C)}$ is connected and hence $C=\st{\I_X(C)}$ by Proposition \ref{prop_connected_subset}. Therefore, $\nu\iota=\id_{\Gm(P,X)^{\op}}$. On the other hand, let $\sigma$ be a non-empty finite astral subset of $X$. Clearly $\sigma\subseteq \I_X(\st{\sigma})$. Therefore, $\iota\nu\geq \id_{\X(\Gmc(P,X))}$. The result follows.
\end{proof}

The following theorem is one of the main results of this article.

\begin{theo} \label{theo_cutsets}
Let $P$ be a poset and let $X$ be a cutset of $P$. Let $\Gm_f(P,X)$ be the subposet of $\Gm(P,X)$ whose elements are the connected components of the non-empty sets $\st{A}$ with $A\in \mathcal{P}_{f,\neq\varnothing}(X)$. Suppose that for all $C\in \Gm_f(P,X)$, $C$ is weakly contractible. Then $P$ and $\Gm(P,X)$ are weak homotopy equivalent. In addition, if $P$ is a finite poset then $\K(P)$ and $\K(\Gm(P,X))$ are simple homotopy equivalent.
\end{theo}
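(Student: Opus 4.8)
The plan is to build a comparison of $P$ with $\Gm(P,X)^{\op}$ by a single order-preserving map and apply McCord's basis-like-cover criterion, then separately handle the finite case by a sequence of elementary collapses. First I would define a map in the spirit of Proposition~\ref{prop_connected_subset}: for each $p\in P$, send $p$ to the minimum of $\{C\in\Gm(P,X)\mid p\in C\}$, which exists by Proposition~\ref{prop_connected_subset} applied to the connected subposet $B=\{p\}$ (one must first check $\Gamma_{\{p\}}\ne\varnothing$, i.e.\ that $p$ lies in some connected component of some $\st{A}$ with $A\in\mathcal{P}_{f,\ne\varnothing}(X)$; this is exactly where the cutset hypothesis is used, since a finite chain containing $p$ is covered by some $a\in X$, so $p\in\st{a}$). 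Concretely this minimum is the connected component of $\st{\I_X(\{p\})}$ containing $p$, and the resulting map $\lambda\colon P\to\Gm(P,X)$ lands in $\Gm_f(P,X)$; checking that $\lambda$ is order-preserving is routine from the fact that $p\le q$ forces $\I_X(\{q\})\subseteq\I_X(\{p\})$ and hence $\st{\I_X(\{p\})}\subseteq\st{\I_X(\{q\})}$.

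Next I would verify the fibre condition for McCord's theorem. The open sets $\Gm(P,X)^{\op}_{\le C}$ (for $C$ a connected component of $\st{A}$) form a basis of the Alexandroff space $\Gm(P,X)^{\op}$, so it suffices to show each preimage $\lambda^{-1}(\Gm(P,X)^{\op}_{\le C})$ is weakly contractible when regarded inside $P$. I claim this preimage is exactly $C$ itself: $p\in C$ iff $C\in\Gamma_{\{p\}}$ iff $\lambda(p)\subseteq C$ iff $\lambda(p)\in\Gm(P,X)^{\op}_{\le C}$, using that the minimum of $\Gamma_{\{p\}}$ is $\subseteq C$ precisely when $C$ belongs to $\Gamma_{\{p\}}$. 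Since $C\in\Gm_f(P,X)$, the hypothesis gives that $C$ is weakly contractible. Therefore $\lambda$ restricts to a weak homotopy equivalence over each basic open set, and McCord's theorem~\cite[Theorem 6]{mccord1966singular} (in the same form used in the proof of Proposition~\ref{prop_crosscut_poset_and_complex}) yields that $\lambda\colon P\to\Gm(P,X)^{\op}$ is a weak homotopy equivalence. Since $\Gm(P,X)$ and $\Gm(P,X)^{\op}$ are weak homotopy equivalent by McCord's results, $P$ and $\Gm(P,X)$ are weak homotopy equivalent.

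For the finite case I would not reuse $\lambda$ but instead descend through the chain of maps in Propositions~\ref{prop_crosscut_poset_and_complex} and~\ref{prop_strong_deformation_retract}. When $P$ is finite, $X$ can be taken finite (a finite poset has a finite cutset, e.g.\ $\mxl(P)$ — or one restricts to the finite cutset already given), so $\Gm(P,X)=\Gm_f(P,X)$, the hypothesis says every $C\in\Gm(P,X)$ is a weakly contractible finite poset, and by the remark at the end of the Preliminaries each such $C$ has the fixed point property and, more to the point, $\K(C)$ is collapsible. I would then argue that $\K(\Gm(P,X))$ and $\K(P)$ are simple homotopy equivalent by a Quillen-type argument à la Barmak: the map $\lambda$ (or rather $\K(\lambda)$) has contractible — indeed collapsible — fibres over the basic opens, and Barmak's simple-homotopy version of McCord's theorem \cite[Theorem 4.5]{barmak2011quillen} then upgrades "weak homotopy equivalence" to "simple homotopy equivalence" of the order complexes. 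One must check the collapsibility (not merely weak contractibility) of the relevant subposets $\lambda^{-1}(\cdots)=C$; this follows because a weakly contractible finite poset need not be collapsible in general, but here the $C$'s arise as connected components of stars $\st{A}$ which, by Lemma~\ref{lemma_astral_bjorner}, are of the form $\Sts_{\st{A}}(z)$ and hence contractible as finite spaces with a "dominated" point — so $\K(C)$ collapses.

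\smallskip

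The main obstacle I expect is the bookkeeping around the fibres in the infinite case: one must be careful that $\lambda^{-1}(\Gm(P,X)^{\op}_{\le C})$ really equals $C$ as a subset of $P$ and not merely weakly equivalent to it, and that the cover $\{\Gm(P,X)^{\op}_{\le C}\}$ is genuinely basis-like so that McCord's theorem applies verbatim — both are true but require the minimality statement of Proposition~\ref{prop_connected_subset} to be invoked with some care. A secondary subtlety is upgrading to simple homotopy equivalence in the finite case: plain weak contractibility of the fibres is not enough for simple homotopy type, so one needs the stronger collapsibility, which is where Lemma~\ref{lemma_astral_bjorner} (realizing each component of a star as a one-point-union-type star $\Sts_{\st{A}}(z)$) does the real work. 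I would also remark that when $X$ itself is not finite but $P$ is, one reduces to a finite cutset, and that the statement for non-finite $P$ makes no claim about simple homotopy type, so no issue arises there.
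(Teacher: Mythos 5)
There is a genuine gap at the very first step: the map $\lambda\colon P\to\Gm(P,X)$ sending $p$ to the connected component of $\st{\I_X(\{p\})}$ containing $p$ is in general \emph{not} order-preserving, and the justification you give is false. If $p\leq q$ it does not follow that $\I_X(\{q\})\subseteq\I_X(\{p\})$: an element $x\in X$ with $x<q$ need not be comparable to $p$. Concretely, let $P=\{p,x,q\}$ with $p<q$, $x<q$ and $p,x$ incomparable, and let $X=\mnl(P)=\{p,x\}$, which is a cutset. Then $\I_X(\{p\})=\{p\}$, so $\lambda(p)=\st{p}=\{p,q\}$, while $\I_X(\{q\})=\{p,x\}$ and $\lambda(q)=\st{\{p,x\}}=\{q\}$; thus $p\leq q$ but $\lambda(p)\not\subseteq\lambda(q)$. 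Nor is $\lambda$ order-reversing in general (take $X=\mxl(P)$ in the poset $a<b$, $a<c$: then $\lambda(a)=\{a\}\subsetneq\{a,b\}=\lambda(b)$), so passing to $\Gm(P,X)^{\op}$ does not repair it. The paper avoids exactly this problem by defining the comparison map on the poset of finite chains $\X(\K(P))$ rather than on $P$: for chains $\sigma\subseteq\tau$ one has $\sigma\subseteq\tau\subseteq C_\tau$ tautologically, whence $C_\sigma\subseteq C_\tau$ by the minimality statement of Proposition \ref{prop_connected_subset}; the price is that $\X(\K(P))$ replaces $P$ (they are weak homotopy equivalent by McCord) and the fibres become $\X(\K(C))$ rather than $C$ itself. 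A second, smaller gap: your basis-like cover ranges over all of $\Gm(P,X)$, but the weak contractibility hypothesis only concerns $C\in\Gm_f(P,X)$; you must restrict the cover to $\{\Gm(P,X)_{\leq D}\mid D\in\Gm_f(P,X)\}$ and then actually verify that this restricted family is still a cover and is basis-like, which the paper does using $\st{A_1}\cap\st{A_2}=\st{A_1\cup A_2}$.

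On the finite case: the tool needed is \cite[Theorem 1.2]{barmak2011quillen} (a simple-homotopy version of Quillen's Theorem A for finite posets), not \cite[Theorem 4.5]{barmak2011quillen}, which is the crosscut theorem itself; Theorem 1.2 requires only homotopically trivial (weakly contractible) fibres, so your detour through collapsibility is unnecessary. It is also unavailable here: your collapsibility argument invokes Lemma \ref{lemma_astral_bjorner}, which requires $X$ to be a \emph{coherent} cutset, an assumption Theorem \ref{theo_cutsets} deliberately does not make, and under the theorem's actual hypotheses a component $C$ need not have a dominating point at all.
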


\begin{proof}
Let $\sigma$ be a non-empty finite chain of $P$. Since $X$ is a cutset, $\I_X(\sigma)\neq \varnothing$. Clearly, $\sigma\subseteq \st{\I_X(\sigma)}$, and since $\sigma$ is connected, there exists a unique connected component $C_\sigma$ of $\st{\I_X(\sigma)}$ such that $\sigma\subseteq C_\sigma$.

Let $\mu\colon \X(\K(P))\to \Gm(P,X)$ be defined by $\mu(\sigma)=C_\sigma$ for all $\sigma\in \X(\K(P))$. We will prove that $\mu$ is an order-preserving map. Let $\sigma,\tau\in \X(\K(P))$ be such that $\sigma\subseteq \tau$. Then $\sigma\subseteq C_\tau$. Thus, $C_\sigma\subseteq C_\tau$ by Proposition \ref{prop_connected_subset}. Therefore, $\mu$ is an order-preserving map.

Let $\mathcal{U}=\{\Gm(P,X)_{\leq D} \mid D\in \Gm_f(P,X)\}$. We will prove that $\mathcal{U}$ is a basis-like open cover of $\Gm(P,X)$. Let $C\in \Gm(P,X)$. By Proposition \ref{prop_connected_subset}, $\I_X(C)\neq\varnothing$. Let $a\in \I_X(C)$. Then $C\subseteq\st{a}$ and $\st{a}\in \Gm_f(P,X)$. Thus $\mathcal{U}$ is a cover of $\Gm(P,X)$. Now, let $V_1,V_2\in \mathcal{U}$ and let $C_0\in V_1\cap V_2$. Let $D_1,D_2\in \Gm_f(P,X)$ such that $V_1=\Gm(P,X)_{\leq D_1}$ and $V_2=\Gm(P,X)_{\leq D_2}$. Since $D_1,D_2\in \Gm_f(P,X)$, there exist finite non-empty subsets $A_1,A_2\subseteq X$ such that $D_1$ is a connected component of $\st{A_1}$ and $D_2$ is a connected component of $\st{A_2}$. Thus, 
\begin{displaymath}
C_0\subseteq D_1\cap D_2 \subseteq \st{A_1} \cap \st{A_2} = \st{A_1\cup A_2}.
\end{displaymath}
Since $C_0$ is connected, there exists a (unique) connected component $D_3$ of $\st{A_1\cup A_2}$ such that $C_0\subseteq D_3$. Clearly, $D_3\in \Gm_f(P,X)$. Note that $D_3\subseteq \st{A_1}$ and since $D_3$ is connected and $D_3\cap D_1 \supseteq C_0 \neq \varnothing$ we obtain that $D_3\subseteq D_1$. Similarly, $D_3\subseteq D_2$. Thus, $C_0\in \Gm(P,X)_{\leq D_3} \subseteq V_1\cap V_2$. Therefore, $\mathcal{U}$ is a basis-like open cover of $\Gm(P,X)$.

We will prove now that, for all $C\in \Gm(P,X)$, 
\begin{displaymath}
\mu^{-1}(\Gm(P,X)_{\leq C})=\{\sigma\in \X(\K(P)) \mid \sigma\subseteq C \}. 
\end{displaymath}
Let $C\in \Gm(P,X)$. If $\sigma\in \X(\K(P))$ is such that $\mu(\sigma) \in \Gm(P,X)_{\leq C}$, then $\sigma\subseteq C_\sigma \subseteq C$. Conversely, if $\sigma\in \X(\K(P))$ is such that $\sigma\subseteq C$ then $\mu(\sigma)=C_\sigma\subseteq C$ by Proposition \ref{prop_connected_subset}.

Since for all $C\in \Gm_f(P,X)$, the subposet $\{\sigma\in \X(\K(P)) \mid \sigma\subseteq C \}$ is isomorphic to $\X(\K(C))$, which is weakly contractible by hypothesis, it follows that $\mu$ is a weak homotopy equivalence by \cite[Theorem 6]{mccord1966singular}. The second assertion of the theorem follows from \cite[Theorem 1.2]{barmak2011quillen} and the fact that a finite simplicial complex has the same simple homotopy type as its barycentric subdivision.
\end{proof}

In \cite[Theorem 2.3]{bjorner1981homotopy}, Bj\"orner proves that if $P$ is a poset and $X\subseteq P$ is a coherent cutset then $|\K(P)|$ and $|\Gmc(P,X)|$ are homotopy equivalent. We will show that Theorem \ref{theo_cutsets} generalizes Bj\"orner's result. To this end, we will use the following proposition which exhibits the relationship between the coherent cutset condition and the elements of the crosscut poset.

\begin{prop} \label{prop_coherent_cutset_condition}
Let $P$ be a poset and let $X$ be a coherent cutset of $P$. Let $\Gm_f(P,X)$ be the subposet of $\Gm(P,X)$ defined in Theorem \ref{theo_cutsets}. Then:
\begin{enumerate}[(a)]
\item $\Gm_f(P,X)=\{\st{A} \mid A \textnormal{ is a finite non-empty astral subset of }X\}$.
\item For each $C\in \Gm_f(P,X)$, there exists $z\in C$ such that $C=\Sts_C(z)$. In particular, the elements of $\Gm_f(P,X)$ are contractible subspaces of $P$.
\end{enumerate}
\end{prop}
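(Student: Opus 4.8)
The plan is to derive both parts directly from Bj\"orner's Lemma \ref{lemma_astral_bjorner}, which is the essential input; everything else is elementary bookkeeping about subposets.

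For part (a) I would first prove the inclusion $\supseteq$. Let $A$ be a finite non-empty astral subset of $X$. Then $\st{A}\neq\varnothing$, and by Lemma \ref{lemma_astral_bjorner} there is $z\in\st{A}$ with $\st{A}=\Sts_{\st{A}}(z)$; in particular every element of $\st{A}$ is comparable to $z$, so $\st{A}$ is contractible by the observation recalled in the preliminaries on order-preserving maps into $\Sts_Q(a)$, and hence connected. Therefore $\st{A}$ is its own unique connected component, so $\st{A}\in\Gm_f(P,X)$. For the reverse inclusion, take $C\in\Gm_f(P,X)$, say $C$ is a connected component of $\st{A}$ with $A\in\mathcal{P}_{f,\neq\varnothing}(X)$. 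Since a connected component is non-empty we have $\st{A}\neq\varnothing$, so $A$ is astral; by Lemma \ref{lemma_astral_bjorner} the subposet $\st{A}$ is connected, whence $C=\st{A}$. This shows $C=\st{A}$ for a finite non-empty astral subset $A$ of $X$.

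For part (b), given $C\in\Gm_f(P,X)$, use part (a) to write $C=\st{A}$ for some finite non-empty astral subset $A$ of $X$. Lemma \ref{lemma_astral_bjorner} then yields $z\in\st{A}=C$ with $\st{A}\subseteq\st{z}=P_{\leq z}\cup P_{\geq z}$; that is, $C\subseteq P_{\leq z}\cup P_{\geq z}$. Intersecting with $C$ gives $C=C_{\leq z}\cup C_{\geq z}=\Sts_C(z)$. The ``in particular'' assertion is then immediate from the fact recalled in the preliminaries that a poset $Q$ admitting $a\in Q$ with $Q=\Sts_Q(a)$ is contractible, since $\id_Q$ is homotopic to the constant map with value $a$.

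The only point requiring minor care is distinguishing stars computed in $P$ from those computed in the subposet $C$, but this presents no real obstacle: once Lemma \ref{lemma_astral_bjorner} places $z$ inside $\st{A}$ and shows $\st{A}\subseteq\st{z}$, restricting the comparabilities to $C$ is purely formal. Thus the entire proposition reduces to a single application of Lemma \ref{lemma_astral_bjorner} plus the elementary observation that a subposet with an element comparable to all of its members is contractible (in particular connected).
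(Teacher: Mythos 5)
Your proposal is correct and follows essentially the same route as the paper: both inclusions of (a) and part (b) are deduced from Lemma \ref{lemma_astral_bjorner} exactly as in the paper's proof, with connectedness of $\st{A}$ obtained from the element $z$ satisfying $\st{A}=\Sts_{\st{A}}(z)$ and contractibility from the preliminary observation on posets of the form $\Sts_Q(a)$. Your extra care about stars computed in $P$ versus in $C$ is harmless but unnecessary once one notes $C=\st{A}$.
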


\begin{proof}
Note that if $A$ is a finite non-empty astral subset of $X$ then $\st{A}$ is connected by Lemma \ref{lemma_astral_bjorner} and thus $\st{A}\in \Gm_f(P,X)$. Conversely, let $C\in \Gm_f(P,X)$ and let $A$ be a non-empty finite subset of $X$ such that $C$ is a connected component of $\st{A}$. Note that $A$ is an astral subset of $P$ and thus, by Lemma \ref{lemma_astral_bjorner}, there exists $z\in \st{A}$ such that $\st{A}=\Sts_{\st{A}}(z)$. In particular, $\st{A}$ is connected and thus $C=\st{A}$. The result follows.
\end{proof}

Now, let $P$ be a poset and let $X$ be a coherent cutset of $P$. By Proposition \ref{prop_coherent_cutset_condition}, the hypotheses of Theorem \ref{theo_cutsets} hold, and thus $|\K(P)|$ and $|\K(\Gm(P,X))|$ are homotopy equivalent. And since, by Proposition \ref{prop_crosscut_poset_and_complex}, $|\K(\Gm(P,X))|$ and $|\Gmc(P,X)|$ are homotopy equivalent, we obtain that $|\K(P)|$ and $|\Gmc(P,X)|$ are homotopy equivalent. Therefore, Theorem \ref{theo_cutsets} generalizes \cite[Theorem 2.3]{bjorner1981homotopy}. Note that the hypotheses of Theorem \ref{theo_cutsets} are much weaker than those of \cite[Theorem 2.3]{bjorner1981homotopy}.

\medskip

Observe also that Theorem \ref{theo_cutsets} generalizes \cite[Theorem 4.5]{barmak2011quillen} which states that if $P$ is a finite poset and $X\subseteq P$ is a crosscut then $\K(P)$ and $\Gmc(P,X)$ are simple homotopy equivalent. Indeed, under the hypotheses of \cite[Theorem 4.5]{barmak2011quillen} the finite spaces $\Gm(P,X)^{\op}$ and $\X(\Gmc(P,X))$ are homotopy equivalent by Proposition \ref{prop_strong_deformation_retract}, hence $\K(\Gm(P,X))$ and $\K(\X(\Gmc(P,X)))$ have the same simple homotopy type by \cite[Theorem 3.10]{barmak2008simple}, and thus $\K(\Gm(P,X))$ and $\Gmc(P,X)$ are simple homotopy equivalent.

\begin{ex} \label{ex_2_sect_3}
Let $P$ be the poset given by the following Hasse diagram.
\begin{center}
\begin{tikzpicture}
\tikzstyle{every node}=[font=\scriptsize]
\foreach \x in {0,1} \draw (\x,0) node(\x){$\bullet$} node[below=1]{\x}; 
\foreach \x in {2,3,4} \draw (\x-2.5,1) node(\x){$\bullet$} node[above=1]{\x}; 
\foreach \x in {2,3,4} \draw (0)--(\x);
\foreach \x in {3,4} \draw (1)--(\x);
\end{tikzpicture}
\end{center}
The set of minimal elements of $P$ is a cutset of $P$ which is not coherent since the set $\{0,1\}$ is a bounded subset which has neither a meet nor a join. The poset $\Gm(P,\mnl(P))$ and the simplicial complex $\Gmc(P,\mnl(P))$ are shown below.
\begin{center}
\begin{tikzpicture}[x=1.5cm, y=1.5cm]
\draw (0.5,-0.7) node(CP){\normalsize $\Gm(P,\mnl(P))$};
\draw node (F01a) at (0,0) [CC] {3};
\draw node (F01b) at (1,0) [CC] {4};
\draw node (F0) at (0,1) [CC] {2 3 4\\0};
\draw node (F1) at (1,1) [CC] {3 4\\1};
\draw [-] (F0) to (F01a) to (F1) to (F01b) to (F0);
\end{tikzpicture}
\hspace*{2cm}
\begin{tikzpicture}[x=1.2cm, y=1.2cm]
\draw (0.5,-0.9) node(CP){\normalsize $\Gmc(P,\mnl(P))$};
\draw(0,0) node[below left]{$0$};
\fill[fill=black] (0,0) circle (.2em);
\draw(1,1) node[above right]{$1$};
\fill[fill=black] (1,1) circle (.2em);
\draw[thick] (0,0) -- (1,1);
\end{tikzpicture}
\end{center}
Clearly $|\K(P)|$ is homotopy equivalent to the $1$--sphere $S^1$ and $|\Gmc(P,\mnl(P))|$ is contractible, thus the assertions of \cite[Theorem 2.3]{bjorner1981homotopy} and \cite[Theorem 4.5]{barmak2011quillen} do not hold without the assumption that the cutset is coherent. Observe also that the poset $P$ does satisfy the hypotheses of Theorem \ref{theo_cutsets} and clearly $|\K(P)|$ and $|\K(\Gm(P,\mnl(P)))|$ are homotopy equivalent.
\end{ex}

We will now study the particular case in which the cutset is the subset of maximal elements and every element of the corresponding crosscut poset has a maximum element. Let $P$ be a poset. Observe that every $C\in \Gm(P,\mxl(P))$ is a down-set of $P$ since, for all $a\in\mxl(P)$, $\st{a}=P_{\leq a}$ is a down-set of $P$. Note also that $\mxl(P)$ is a cutset of $P$ if and only if $P=\bigcup\limits_{a\in\mxl(P)} P_{\leq a}$ and that if $P$ is a finite poset then $\mxl(P)$ is a cutset of $P$.

\begin{theo}\label{theo_every_C_has_maximum}
Let $P$ be a poset such that $\mxl(P)$ is a cutset of $P$. Suppose that every $C\in \Gm(P,\mxl(P))$ has a maximum element. Let $P_{0}=\{\max C \mid C\in \Gm(P,\mxl(P))\}$ and let $i\colon P_{0}\to P$ be the inclusion map. Then:
\begin{enumerate}[($a$)]
\item There exists a retraction $r\colon P\to P_{0}$ such that $ir\geq \id_P$. In particular, $P_{0}$ is a strong deformation retract of $P$.
\item The posets $\Gm(P,\mxl(P))$ and $P_{0}$ are isomorphic.
\end{enumerate}
\end{theo}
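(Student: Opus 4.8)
The plan is to build the retraction $r$ in part ($a$) explicitly and then read off part ($b$) from it. For each $x \in P$, I would first identify a canonical connected component of $\Sts_P(\I_{\mxl(P)}(x))$ attached to $x$. Since $\mxl(P)$ is a cutset and $\{x\}$ is a (trivially connected) chain, $\I_{\mxl(P)}(x) \neq \varnothing$ and $x \in \Sts_P(\I_{\mxl(P)}(x))$, so there is a unique connected component $C_x$ of $\Sts_P(\I_{\mxl(P)}(x))$ with $x \in C_x$; note $C_x \in \Gm(P,\mxl(P))$. I would then define $r(x) = \max C_x$, which exists by hypothesis, and the inclusion $i \colon P_0 \to P$ sends $\max C$ back into $P$. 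The first thing to check is $r(x) \geq x$ for all $x$: since $x \in C_x$ and $\max C_x$ is the maximum of $C_x$, this is immediate; this gives $ir \geq \id_P$.

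Next I would verify that $r$ is order-preserving and that $ri = \id_{P_0}$, which together with $ir \geq \id_P$ yields that $r$ is a retraction onto $P_0$ and that $P_0$ is a strong deformation retract of $P$ (via the homotopy from the Alexandroff-space discussion, since $ir$ and $\id_P$ are comparable order-preserving maps). For order-preservation: if $x \leq y$ in $P$, then every $a \in \I_{\mxl(P)}(y)$ satisfies $y \in \Sts_P(a) = P_{\leq a}$, hence $x \leq y \leq a$, so $a \in \I_{\mxl(P)}(x)$; thus $\I_{\mxl(P)}(y) \subseteq \I_{\mxl(P)}(x)$ and $\Sts_P(\I_{\mxl(P)}(x)) \subseteq \Sts_P(\I_{\mxl(P)}(y))$. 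Now the chain $\{x,y\}$ lies in $\Sts_P(\I_{\mxl(P)}(x))$ (indeed $y \leq a$ for $a$ in this set, using $\I_{\mxl(P)}(y)\subseteq\I_{\mxl(P)}(x)$), so $C_x = C_y$ by Proposition~\ref{prop_connected_subset} applied with $B = \{x,y\}$ — wait, more carefully: $x,y$ lie in a common connected component of $\Sts_P(\I_{\mxl(P)}(x))$, which must be $C_x$; and the connected component of $\Sts_P(\I_{\mxl(P)}(y))$ containing $y$ is $C_y$, which contains $C_x$ by Proposition~\ref{prop_connected_subset}. Hence $\max C_x \leq \max C_y$, i.e. $r(x) \leq r(y)$. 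For $ri = \id_{P_0}$: if $C \in \Gm(P,\mxl(P))$ and $m = \max C$, then by Proposition~\ref{prop_connected_subset} $C$ is a connected component of $\Sts_P(\I_{\mxl(P)}(C))$, and since $m \in C$ one checks $\I_{\mxl(P)}(m) = \I_{\mxl(P)}(C)$ (the inclusion $\I_{\mxl(P)}(C)\subseteq\I_{\mxl(P)}(m)$ is trivial, and conversely if $a \geq m$ then $a \geq x$ for all $x \in C$ since $C = P_{\leq m}\cap\cdots$ — here I use that $C$ is a down-set with maximum $m$, so every element of $C$ is $\leq m \leq a$), whence $C_m = C$ and $r(m) = m$.

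For part ($b$), the map $C \mapsto \max C$ is a bijection $\Gm(P,\mxl(P)) \to P_0$ with inverse $m \mapsto C_m$ — injectivity holds because $C$ is recovered as the component of $\Sts_P(\I_{\mxl(P)}(\max C))$ containing $\max C$, by the computation just above. It remains to see this bijection and its inverse are order-preserving. If $C \subseteq C'$ in $\Gm(P,\mxl(P))$, then $\max C \in C \subseteq C'$, so $\max C \leq \max C'$. Conversely, if $\max C \leq \max C'$, then applying the order-preservation argument for $r$ (or directly Proposition~\ref{prop_connected_subset} with $B$ the chain $\{\max C, \max C'\}$, which lies in $\Sts_P(\I_{\mxl(P)}(\max C))$ since $\I_{\mxl(P)}(\max C')\subseteq\I_{\mxl(P)}(\max C)$) gives $C = C_{\max C} \subseteq C_{\max C'} = C'$.

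The main obstacle I anticipate is the bookkeeping around $\I_{\mxl(P)}(C) = \I_{\mxl(P)}(\max C)$ and the repeated invocation of Proposition~\ref{prop_connected_subset} to pin down which connected component one lands in; all of it hinges on the fact that elements of $\Gm(P,\mxl(P))$ are down-sets of $P$ (noted just before the theorem) so that a maximum of $C$ genuinely dominates all of $C$. Once that is cleanly stated, everything else is formal.
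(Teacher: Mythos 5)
Your overall strategy is the same as the paper's (define $C_x$ as the component of $\st{\I_{\mxl(P)}(\{x\})}$ containing $x$, set $r(x)=\max C_x$, check $ir\geq\id_P$, $ri=\id_{P_0}$, and read off ($b$) from the down-set structure), and your treatments of $ri=\id_{P_0}$ and of the bijection in ($b$) are correct. But the step you use to prove that $r$ is order-preserving is genuinely wrong as written. You claim that for $x\leq y$ the chain $\{x,y\}$ lies in $\st{\I_{\mxl(P)}(\{x\})}$, justified by $\I_{\mxl(P)}(\{y\})\subseteq\I_{\mxl(P)}(\{x\})$; this is a non sequitur (to place $y$ in $\st{a}$ you need $a\in\I_{\mxl(P)}(\{y\})$, and the inclusion you have goes the wrong way for that), and the claim is false in general. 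Concretely, take $P=\{x,y,a,b\}$ with $x<y<a$ and $x<b$ the only nontrivial relations: the hypotheses of the theorem hold, $\I_{\mxl(P)}(\{x\})=\{a,b\}$, $\st{\{a,b\}}=\{x\}$, so $C_x=\{x\}$ while $C_y=\{x,y,a\}$. Thus $y\notin C_x$, and your intermediate conclusion that ``$x,y$ lie in a common connected component of $\st{\I_{\mxl(P)}(\{x\})}$, which must be $C_x$'' fails (as does the tentative ``$C_x=C_y$''). Moreover, even if $y\in C_x$ did hold, Proposition \ref{prop_connected_subset} applied to $B=\{y\}$ would yield $C_y\subseteq C_x$, the opposite of the inclusion $C_x\subseteq C_y$ that you need for $r(x)\leq r(y)$. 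The same flawed step reappears in your argument that $\varphi^{-1}$ is order-preserving in part ($b$).

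The fix is short and is exactly what the paper does (and what your closing remark about down-sets already points toward): since $C_y\in\Gm(P,\mxl(P))$ is a down-set of $P$ and $x\leq y\in C_y$, you get $x\in C_y$; then Proposition \ref{prop_connected_subset} applied to $B=\{x\}$ says $C_x$ is the \emph{minimum} of $\{C\in\Gm(P,\mxl(P))\mid x\in C\}$, whence $C_x\subseteq C_y$ and $r(x)\leq r(y)$. (Equivalently, and without invoking the down-set property: $\{x,y\}\subseteq\st{\I_{\mxl(P)}(\{y\})}$ — note the subscript $y$, not $x$ — so $x$ lies in the component $C_y$, and then minimality of $C_x$ applies.) The identical correction handles the order-reflection step in part ($b$). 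With that repaired, the rest of your argument is sound.
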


\begin{proof} \ 

($a$) For each $x\in P$ let $C_x$ be the connected component of $\st{\I_{\mxl(P)}({\{x\}})}$ that contains $x$. Observe that $\I_{\mxl(P)}({\{x\})}\neq \varnothing$ since $\mxl(P)$ is a cutset of $P$. Clearly, $C_x\in\Gm(P,\mxl(P))$.

Let $r\colon P \to P_{0}$ be defined by $r(x)=\max C_x$. We claim that $r$ is an order-preserving map. Indeed, let $x_1,x_2\in P$ such that $x_1\leq x_2$. Since $C_{x_2}$ is a down-set of $P$ we obtain that $x_1\in C_{x_2}$. And since, by Proposition \ref{prop_connected_subset}, $C_{x_1}$ is the minimum element of $\{C\in \Gm(P,\mxl(P)) \mid \{x_1\}\subseteq C \}$, we obtain that $C_{x_1}\subseteq C_{x_2}$. Hence, $r(x_1)\leq r(x_2)$.

Clearly, $ir\geq \id_P$. We will prove now that $ri=\id_{P_{0}}$. Let $z\in P_{0}$. Then there exists $C'\in \Gm(P,\mxl(P))$ such that $z=\max C'$. By Proposition \ref{prop_connected_subset}, $C_{z}$ is the minimum element of $\{C\in \Gm(P,\mxl(P)) \mid \{z\}\subseteq C \}$. Hence, $C_z\subseteq C'$. Thus, $\max C_z = z$ and hence $ri(z)=z$. 

($b$) Let $\varphi\colon \Gm(P,\mxl(P)) \to P_{0}$ and $\psi \colon P_{0} \to \Gm(P,\mxl(P))$ be given by $\varphi(C)=\max C$ and $\psi(z)=P_{\leq z}$. We will prove that $\psi$ is well-defined and that $\varphi$ and $\psi$ are mutually inverse isomorphisms.

Let $z\in P_{0}$ and let $C\in \Gm(P,\mxl(P))$ such that $z=\max C$. Since $C$ is a down-set of $P$ we obtain that $C=P_{\leq z}$. Therefore, $\psi$ is well-defined and $C=\psi(\varphi(C))$. It follows that $\psi\varphi=\id_{\Gm(P,\mxl(P))}$. Clearly, $\varphi\psi=\id_{P_{0}}$ and $\varphi$ and $\psi$ are order-preserving maps.
\end{proof}

Now, we will give some applications of the previous theorem. Firstly, let $P$ be a finite poset such that $\mnl(P)$ is a coherent cutset of $P$. Let $P_M$ be the subposet of $P$ consisting of all the elements that can be obtained as joins of non-empty subsets of $\mnl(P)$ and let $\widehat{P}_M$ be the lattice obtained by adding a maximum and a minimum element to $P_M$. Under these assumptions, \cite[Proposition 3.1]{baclawski1979fixed} states that if the lattice $\widehat{P}_M$ is non-complemented then the poset $P$ has the strong fixed point property. However, in the proof of this result Baclawski and Bj\"orner obtain something stronger: that the poset $P$ is weakly contractible. In addition, by the proof of \cite[Corollary 3.2]{baclawski1979fixed} one obtains that if the lattice $\widehat{P}_M$ is non-complemented then $P_M$ is weakly contractible. Thus, with the previous notations, we can formulate a stronger version of \cite[Proposition 3.1]{baclawski1979fixed} as follows.

\begin{prop} \label{prop_Baclawski_Bjorner_generalized}
Let $P$ be a finite poset such that $\mnl(P)$ is a coherent cutset of $P$. If $P_M$ is weakly contractible then $P$ is weakly contractible.
\end{prop}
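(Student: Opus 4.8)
The plan is to obtain Proposition \ref{prop_Baclawski_Bjorner_generalized} by applying Theorem \ref{theo_every_C_has_maximum} to the opposite poset $P^{\op}$. First I would record a few identifications. Since $\Sts_P(a)=P_{\leq a}\cup P_{\geq a}$ is unchanged when $P$ is replaced by $P^{\op}$, we have $\Sts_{P^{\op}}(A)=\Sts_P(A)$ for every non-empty $A\subseteq P$; as $\mnl(P)=\mxl(P^{\op})$ and the cutset condition refers only to finite chains (which are the same in $P$ and in $P^{\op}$), it follows that $\mxl(P^{\op})=\mnl(P)$ is a cutset of $P^{\op}$ and that $\Gm(P^{\op},\mxl(P^{\op}))=\Gm(P,\mnl(P))$ as subposets of $(\mathcal{P}_{\neq\varnothing}(P),\subseteq)$. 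Moreover $\mnl(P)$, being an antichain which is a coherent cutset, is a crosscut of $P$, and since $P$ is finite we have $\Gm(P,\mnl(P))=\Gm_f(P,\mnl(P))$.

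The crucial step is to verify that the hypotheses of Theorem \ref{theo_every_C_has_maximum} hold for $P^{\op}$, that is, that every $C\in\Gm(P,\mnl(P))$ has a maximum element in $P^{\op}$, equivalently a minimum element in $P$. By Proposition \ref{prop_coherent_cutset_condition}(a), together with $\Gm(P,\mnl(P))=\Gm_f(P,\mnl(P))$, we may write $C=\st{A}$ for some finite non-empty astral subset $A\subseteq\mnl(P)$. Because every $a\in A$ is minimal we have $\st{a}=P_{\geq a}$, so $\st{A}=\{x\in P\mid x\geq a\text{ for all }a\in A\}$ is exactly the set of upper bounds of $A$ in $P$. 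Since $A$ is astral it is bounded, and since its elements are pairwise incomparable minimal elements it cannot have a meet once $|A|\geq 2$; as $\mnl(P)$ is a coherent cutset, $A$ therefore has a join $\bigvee A$ in $P$ (if $|A|=1$, take $\bigvee A$ to be the unique element of $A$). This join is the least upper bound of $A$, hence the minimum of $C=\st{A}$. Thus Theorem \ref{theo_every_C_has_maximum} applies to $P^{\op}$; and along the way we have identified the underlying set $(P^{\op})_0=\{\max_{P^{\op}}C\mid C\in\Gm(P^{\op},\mxl(P^{\op}))\}$ with $\{\bigvee A\mid\varnothing\neq A\subseteq\mnl(P),\ \bigvee A\text{ exists in }P\}$, which is precisely $P_M$; in other words, $(P^{\op})_0$ is the subposet $P_M^{\op}$ of $P^{\op}$.

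It then remains to assemble the conclusion. By Theorem \ref{theo_every_C_has_maximum}(a) applied to $P^{\op}$, $P_M^{\op}=(P^{\op})_0$ is a strong deformation retract of $P^{\op}$, hence $P^{\op}$ and $P_M^{\op}$ are (weak) homotopy equivalent; combining this with the weak homotopy equivalences between $P$ and $P^{\op}$ and between $P_M$ and $P_M^{\op}$ recalled in the Preliminaries, we conclude that $P$ and $P_M$ are weak homotopy equivalent, so $P$ is weakly contractible because $P_M$ is. (Equivalently, one may observe that the retraction $r\colon P^{\op}\to P_M^{\op}$ with $ir\geq\id_{P^{\op}}$ is the very same map viewed as a retraction $r\colon P\to P_M$ satisfying $ir\leq\id_P$, hence homotopic to $\id_P$ relative to $P_M$ by the comparison-of-maps remark in the Preliminaries, so that $P_M$ is itself a strong deformation retract of $P$.) I expect the main obstacle to be the middle step: one must see that being astral in $\mnl(P)$, together with coherence, forces $A$ to have a join rather than a meet, so that every element of $\Gm(P,\mnl(P))$ acquires a minimum in $P$ and the resulting poset $(P^{\op})_0$ is exactly $P_M^{\op}$; granting this, the remainder is routine bookkeeping with Theorem \ref{theo_every_C_has_maximum} and the weak equivalence between $P$ and $P^{\op}$.
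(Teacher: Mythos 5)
Your proposal is correct and follows essentially the same route as the paper: the paper likewise shows that coherence forces each astral $A\subseteq\mnl(P)$ to have a join (so every $C=\st{A}\in\Gm(P,\mnl(P))$ has a minimum and $P_M=\{\min C \mid C\in \Gm(P,\mnl(P))\}$), and then applies the dual version of Theorem \ref{theo_every_C_has_maximum}($a$). Your version merely spells out a few steps the paper leaves implicit, such as why a meet is impossible for $|A|\geq 2$.
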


\begin{proof}
We will prove first that every $C\in \Gm(P,\mnl(P))$ has a minimum element and that $P_M=\{\min C \mid C\in \Gm(P,\mnl(P))\}$.
Let $A\in \mathcal{P}_{\neq\varnothing}(\mnl(P))$ such that $\st{A}\neq\varnothing$. Since $\mnl(P)$ is a coherent cutset we obtain that the subset $A$ has a join. Hence $\st{A}$ has a minimum element and, in particular, $\st{A}$ is connected. It follows that 
\begin{displaymath}
\Gm(P,\mnl(P))=\{\st{A} \mid A\in \mathcal{P}_{\neq\varnothing}(\mnl(P)) \textnormal{ such that }\st{A}\neq\varnothing\}\end{displaymath}
and that every $C\in \Gm(P,\mnl(P))$ has a minimum element. It is not difficult to verify that $P_M=\{\min C \mid C\in \Gm(P,\mnl(P))\}$.

Therefore, $P$ and $P_M$ are homotopy equivalent by item ($a$) of (the dual version of) Theorem \ref{theo_every_C_has_maximum}. The result follows.
\end{proof}

The previous proof shows that the subposet $P_0$ of the dual version of Theorem \ref{theo_every_C_has_maximum} coincides with the subposet $P_M$ defined in \cite{baclawski1979fixed} when the cutset $\mnl(P)$ is coherent. Observe also that the hypotheses of Theorem \ref{theo_every_C_has_maximum} are much weaker than those of Proposition \ref{prop_Baclawski_Bjorner_generalized}. 

\medskip

Now, we will apply Theorem \ref{theo_every_C_has_maximum} to obtain some results concerning the fixed point property. Firstly, observe that if $P$ is a finite poset such that every $C\in \Gm(P,\mxl(P))$ has a maximum element then, by Theorem \ref{theo_every_C_has_maximum}, $P$ and $\Gm(P,\mxl(P))$ are homotopy equivalent finite T$_0$--spaces and thus, by \cite[Corollary 10.1.4]{barmak2011algebraic}, $P$ has the fixed point property if and only if $\Gm(P,\mxl(P))$ has the fixed point property. The following proposition generalizes this result for a not necessarily finite poset $P$.

\begin{prop} \label{prop_P_weakly_contractible}
Let $P$ be a poset such that $\mxl(P)$ is a cutset of $P$. Suppose that every non-empty subset $A\subseteq P$ such that $A^{\op}$ is well-ordered has an infimum in $P$ and that every $C\in \Gm(P,\mxl(P))$ has a maximum element. Then the poset $P$ has the fixed point property if and only if the poset $\Gm(P,\mxl(P))$ has the fixed point property.
\end{prop}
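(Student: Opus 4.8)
The plan is to reduce everything, via Theorem~\ref{theo_every_C_has_maximum}, to a statement about the subposet $P_0=\{\max C \mid C\in\Gm(P,\mxl(P))\}$. By that theorem there is an order-preserving retraction $r\colon P\to P_0$ with $ir\geq\id_P$, where $i\colon P_0\hookrightarrow P$ is the inclusion, and $\Gm(P,\mxl(P))$ is isomorphic to $P_0$. Hence it suffices to show that $P$ has the fixed point property if and only if $P_0$ does.

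The ``only if'' direction needs none of the extra hypotheses: it is the standard fact that a retract of a poset with the fixed point property again has it. Given an order-preserving $g\colon P_0\to P_0$, the composite $igr\colon P\to P$ is order-preserving, hence has a fixed point $x$; applying $r$ and using $ri=\id_{P_0}$ gives $r(x)=r\bigl(igr(x)\bigr)=g\bigl(r(x)\bigr)$, so $r(x)$ is a fixed point of $g$.

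For the ``if'' direction, assume $P_0$ has the fixed point property and let $f\colon P\to P$ be order-preserving. Then $g=rfi\colon P_0\to P_0$ is order-preserving, so it has a fixed point $z\in P_0$; since $i(z)=z$ this means $r(f(z))=z$, and from $ir\geq\id_P$ we get $z=r(f(z))\geq f(z)$, i.e.\ $f(z)\leq z$. It remains to manufacture a genuine fixed point of $f$ out of the inequality $f(z)\leq z$. If $P$ were finite the decreasing sequence $z\geq f(z)\geq f^2(z)\geq\cdots$ would stabilize at a fixed point; in general I would run a transfinite descent: set $z_0=z$, $z_{\alpha+1}=f(z_\alpha)$, and $z_\lambda=\inf\{z_\alpha\mid\alpha<\lambda\}$ at limit ordinals $\lambda$. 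One checks by transfinite induction that $(z_\alpha)$ is non-increasing: the base and successor steps use $f(z_0)\leq z_0$ and the monotonicity of $f$, and the limit step uses that $f(z_\lambda)\leq f(z_\alpha)=z_{\alpha+1}\leq z_\alpha$ for every $\alpha<\lambda$, whence $f(z_\lambda)\leq z_\lambda$. At each limit stage the set $\{z_\alpha\mid\alpha<\lambda\}$ is the range of a non-increasing ordinal-indexed family, so every non-empty subset of it has a greatest element; in particular it is a chain whose opposite is well-ordered, so by hypothesis its infimum exists in $P$ and the recursion never breaks down. Since $P$ is a set, $(z_\alpha)$ cannot be strictly decreasing through all ordinals, so there is an $\alpha$ with $z_\alpha=z_{\alpha+1}=f(z_\alpha)$, a fixed point of $f$. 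Therefore $P$ has the fixed point property.

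The two ``retract'' manipulations are routine; the substantive part—and the sole reason for the hypothesis that every subset with well-ordered opposite has an infimum—is the transfinite descent, specifically verifying that the construction is well-defined at limit ordinals and that the family stays monotone. (Equivalently, one may restrict $f$ to $P_{\leq z}$, which has a maximum and inherits the infimum hypothesis, and invoke an order-theoretic fixed point theorem of Abian--Brown type for such posets; I expect this repackaging to be the cleanest way to present the step.)
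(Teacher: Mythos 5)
Your proof is correct and follows essentially the same route as the paper: reduce to the subposet $P_0=\{\max C \mid C\in \Gm(P,\mxl(P))\}$ via Theorem~\ref{theo_every_C_has_maximum}, use the retraction for one direction, and for the other extract a point $z$ with $f(z)\leq z$ from a fixed point of $rfi$. The only difference is that where you carry out the transfinite descent by hand (correctly, and this is exactly what the infimum hypothesis is for), the paper simply cites the Abian--Brown theorem for this final step.
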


\begin{proof}
Let $P_{0}=\{\max C \mid C\in \Gm(P,\mxl(P))\}$. We will prove that the poset $P$ has the fixed point property if and only if the poset $P_{0}$ has the fixed point property, and thus the result will follow from the second item of Theorem \ref{theo_every_C_has_maximum}.

Let $i\colon P_0\to P$ be the inclusion map. By the first item of Theorem \ref{theo_every_C_has_maximum}, there exists a retraction $r\colon P\to P_{0}$ such that $ir\geq \id_P$. Thus, if $P$ has the fixed point property then $P_{0}$ has the fixed point property since $P_{0}$ is a retract of $P$. Now, suppose that $P_{0}$ has the fixed point property and let $f\colon P\to P$ be an order-preserving map. Thus, the map $rfi\colon P_{0}\to P_{0}$ has a fixed point $a$. Then, $i(a)=irfi(a)\geq f(i(a))$. Hence, the map $f$ has a fixed point by the Abian-Brown theorem (\cite[Theorem 2]{abian1961theorem}). Therefore, the poset $P$ has the fixed point property.
\end{proof}

The following result is another application of Theorem \ref{theo_every_C_has_maximum} which is related to the fixed point property.

\begin{prop} \label{prop_fixed_simplex_property}
Let $K$ be a simplicial complex such that every simplex of $K$ is contained in a maximal simplex of $K$. Let $M$ be the set of maximal simplices of $K$ and let $L_K$ be the poset of non-empty intersections of simplices of $M$, ordered by inclusion. Then $K$ has the fixed simplex property if and only if the poset $L_K$ has the fixed point property.
\end{prop}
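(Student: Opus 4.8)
The idea is to translate the simplicial statement into a statement about the poset $\X(K)$ of simplices of $K$, and then apply Theorem \ref{theo_every_C_has_maximum} to $P=\X(K)^{\op}$ (so that maximal simplices of $K$ become minimal elements of $P$). First I would recall that $K$ has the fixed simplex property if and only if the finite poset $\X(K)$ has the fixed point property; indeed, a simplicial self-map of $K$ induces an order-preserving self-map of $\X(K)$, a fixed simplex is exactly a fixed point, and conversely any order-preserving self-map of $\X(K)$ that has a fixed point gives a simplicial map with a fixed simplex after passing to the barycentric subdivision (or one argues directly with the known equivalence between the fixed simplex property of $K$ and the fixed point property of $\X(K)$). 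So the goal becomes: $\X(K)$ has the fixed point property $\iff$ $L_K$ has the fixed point property.

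**Identifying the crosscut poset.** Set $P=\X(K)^{\op}$. Then $\mnl$ of $\X(K)$ — i.e.\ $\mxl(P)$ — is the set $M$ of maximal simplices, and the hypothesis that every simplex lies in a maximal one says precisely that $M$ is a cutset of $\X(K)$, equivalently that $\mxl(P)$ is a cutset of $P$. Now I would compute $\Gm(P,\mxl(P))$. For a non-empty $A\subseteq M$, the subposet $\Sts_P(A)$ consists of the simplices $\tau$ of $K$ such that $\tau\subseteq\sigma$ for every $\sigma\in A$ together with those with $\tau\supseteq\sigma$ for some $\sigma\in A$; but since each $\sigma\in A$ is a maximal simplex, $\tau\supseteq\sigma$ forces $\tau=\sigma\in A$, and then $\sigma\subseteq\sigma'$ for the other $\sigma'\in A$ forces $\sigma=\sigma'$, so unless $|A|=1$ this part is empty and in general $\Sts_P(A)$ is the set of non-empty faces of $\bigcap_{\sigma\in A}\sigma$ (when that intersection is non-empty). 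In $P=\X(K)^{\op}$ this poset has a maximum element, namely the simplex $\bigcap_{\sigma\in A}\sigma$, hence is connected; thus $\Gm(P,\mxl(P))=\{\,\X(K)^{\op}_{\leq\tau} : \tau \text{ a non-empty intersection of maximal simplices}\,\}$ and every such $C$ has a maximum element (the intersection $\tau$ itself, viewed in $P$). Under the correspondence $C\mapsto\max C$ this is exactly the poset $L_K$ of non-empty intersections of maximal simplices ordered by inclusion.

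**Applying the structural theorem.** With the above, $P=\X(K)^{\op}$ satisfies the hypotheses of Theorem \ref{theo_every_C_has_maximum}: $\mxl(P)$ is a cutset and every element of $\Gm(P,\mxl(P))$ has a maximum. By part ($b$) of that theorem, $\Gm(P,\mxl(P))\cong P_0:=\{\max C\}$, and by the identification just made $P_0\cong L_K$. By part ($a$), $P_0$ is a retract of $P$ with $ir\geq\id_P$; since $P$ is a finite poset, Proposition \ref{prop_P_weakly_contractible}'s argument (the Abian–Brown theorem together with the retraction) — or directly the fact stated just before it, that for finite posets the fixed point property passes both ways along such a deformation retraction (\cite[Corollary 10.1.4]{barmak2011algebraic}) — shows $P$ has the fixed point property $\iff$ $P_0\cong L_K$ has it. Combining with the first paragraph, $K$ has the fixed simplex property $\iff$ $\X(K)$ has the fixed point property $\iff$ $L_K$ has the fixed point property.

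**Expected main obstacle.** The genuinely delicate point is the very first equivalence: "$K$ has the fixed simplex property $\iff$ $\X(K)$ has the fixed point property." The direction "$\X(K)$ f.p.p.\ $\Rightarrow$ $K$ has fixed simplex property" is immediate since a simplicial self-map induces an order-preserving self-map of $\X(K)$ whose fixed point is a fixed simplex; the reverse direction needs care, since an order-preserving self-map of $\X(K)$ is not literally induced by a simplicial map of $K$, so one must either invoke the known characterization (a simplicial complex $K$ has the fixed simplex property iff its face poset $\X(K)$ has the fixed point property, which in turn is equivalent to the fixed simplex property of the barycentric subdivision $K'$), or spell out the standard passage between simplicial maps of $K'$ and order-preserving maps of $\X(K)$. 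Everything else is a routine unwinding of definitions of $\Sts_P$, $\Gm$, and $L_K$, plus a direct appeal to Theorem \ref{theo_every_C_has_maximum} and the Abian–Brown argument already used in Proposition \ref{prop_P_weakly_contractible}.
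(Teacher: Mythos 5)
Your overall strategy is the paper's own: reduce the fixed simplex property of $K$ to the fixed point property of the face poset $\X(K)$ (the paper cites \cite[Proposition 9.20]{schroder2016ordered} for exactly the equivalence you flag as the delicate point), identify $\Gm(\X(K),M)$ with $L_K$ via $C\mapsto \max C=\bigcap_{s\in A}s$, and invoke Theorem \ref{theo_every_C_has_maximum} together with the retraction from its item ($a$). Two points in your write-up need repair, though. First, the passage to $P=\X(K)^{\op}$ is both unnecessary and incorrect as written: if $P=\X(K)^{\op}$ then $\mxl(P)=\mnl(\X(K))$ is the set of \emph{vertices} of $K$, not the set $M$ of maximal simplices, and $\bigcap_{\sigma\in A}\sigma$ is the \emph{minimum}, not the maximum, of $\Sts_P(A)$ in the reversed order, so the theorem as you invoke it would apply to the wrong cutset. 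The maximal simplices of $K$ are already the maximal elements of $\X(K)$ itself; apply Theorem \ref{theo_every_C_has_maximum} directly to $P=\X(K)$, where your computation of $\st{A}$ as the set of non-empty faces of $\bigcap_{\sigma\in A}\sigma$ is correct and that intersection is genuinely the maximum. Second, $K$ is not assumed finite, so you cannot conclude with ``since $P$ is a finite poset.'' The paper closes the implication from the fixed point property of $L_K$ to that of $\X(K)$ by observing that from $i(\sigma)\geq f(i(\sigma))$ the descending iterates of $f$ must stabilize because a simplex has only finitely many faces (local finiteness of $\X(K)$); your alternative suggestion of the Abian--Brown argument as in Proposition \ref{prop_P_weakly_contractible} also works (descending chains in $\X(K)$ stabilize, so the required infima exist), but the bare appeal to finiteness of $\X(K)$ does not. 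With these two corrections your argument coincides with the paper's proof.
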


\begin{proof}
By \cite[Proposition 9.20]{schroder2016ordered}, $K$ has the fixed simplex property if and only if the poset $\X(K)$ has the fixed point property. Note that $M=\mxl(\X(K))$ is a cutset of $\X(K)$.

We claim that for each $A\in \mathcal{P}_{\neq\varnothing}(M)$ the subset $\st{A}$ of $\X(K)$ has a maximum element if it is not empty. Indeed, let $A\subseteq M$ be a non-empty subset such that $\st{A}\neq \varnothing$. Let $\sigma_A=\bigcap\limits_{s\in A}s$. Then $\sigma_A\neq\varnothing$ since $\st{A}\neq \varnothing$. Thus $\sigma_A\in \X(K)$ and clearly $\sigma_A=\max \st{A}$. 

In particular, note that $\Gm(\X(K),M)=\{\st{A}\mid A\in \mathcal{P}_{\neq\varnothing}(M) \textnormal{ such that } \st{A}\neq \varnothing\}$. It is not difficult to verify that $L_K=\{\max C \mid C\in \Gm(\X(K),M)\}$. Let $i\colon L_K\to \X(K)$ be the inclusion map. By Theorem \ref{theo_every_C_has_maximum}, there exists a retraction $r\colon \X(K)\to L_K$ such that $ir\geq \id_{\X(K)}$. Hence, if $\X(K)$ has the fixed point property then $L_K$ has the fixed point property. Now suppose that $L_K$ has the fixed point property and let $f\colon \X(K)\to \X(K)$ be an order-preserving map. As in the previous proof, the map $rfi\colon L_K\to L_K$ has a fixed point $\sigma$, and then $i(\sigma)=irfi(\sigma)\geq f(i(\sigma))$. Since $\X(K)$ is locally finite it follows that the map $f$ has a fixed point. Therefore, the poset $\X(K)$ has the fixed point property if and only if the poset $L_K$ has the fixed point property. The result follows.
\end{proof}

The previous proof shows that Proposition \ref{prop_fixed_simplex_property} could have been obtained as a consequence of Proposition \ref{prop_P_weakly_contractible} and item ($b$) of Theorem \ref{theo_every_C_has_maximum}. We preferred to keep the proof of Proposition \ref{prop_fixed_simplex_property} as it is to show that, in this case, the application of the Abian-Brown theorem can be replaced by a much simpler finiteness argument.

\bibliographystyle{acm}
\bibliography{ref_crosscut}

\end{document}